\newtheorem{theo}{Theorem}                     
\newtheorem{propo}{Proposition}                  
\newtheorem{coro}{Corollary} 
\newtheorem{lemm}{Lemma}
\theoremstyle{remark}                  
\newtheorem{rema}{\bf Remark}
\newtheorem{exem}{\bf Example}
\begin{document}

\title{Hyperelliptic quotients of generalized Humbert curves}

\author{Rub\'en A. Hidalgo}
\address{Departamento de Matem\'atica y Estad\'{\i}stica\\
Universidad de La Frontera, Temuco, Chile}
\email{ruben.hidalgo@ufrontera.cl}

\thanks{Partially supported by projects Fondecyt 1190001}
\keywords{Algebraic curves, Riemann surfaces, Automorphisms}
\subjclass[2000]{30F10, 30F40}

\begin{abstract} 
A group $H \cong {\mathbb Z}_{2}^{n}$, $n \geq 3$, of conformal automorphisms of a closed Riemann surface $S$ such that $S/H$ has genus zero and exactly $(n+1)$ cone points is called a generalized Humbert group of type $n$, in which case, $S$ is called a generalized Humbert curve of type $n$. It is known that a generalized Humbert curve $S$ of type $n \geq 4$ is non-hyperelliptic and that it admits a unique generalized Humbert group $H$ of type $n$. We describe those subgroups $K$ of $H$, acting freely on $S$, such that $S/K$ is hyperelliptic. 
\end{abstract}

\maketitle

\section{Introduction}
There is a well known equivalence between the categories of closed Riemann surfaces and that of non-singular irreducible complex projective curves. By the uniformization theorem, every genus zero Riemann surface is biholomorphic to the Riemann sphere $\widehat{\mathbb C}$ which, in turns, corresponds to the complex projective line ${\mathbb P}^{1}$. Those of genus one are represented by the elliptic curves $E_{\lambda}:=\{[x:y:z] \in {\mathbb P}^{2}: y^{2}z=x(x-z)(x-\lambda z)\}$, where $\lambda \in {\mathbb C}\setminus \{0,1\}$. Hyperelliptic Riemann surfaces of genus $g \geq 2$ (those admitting a conformal automorphism of order two with $2g+2$ fixed points) can be described by the hyperelliptic curves $E_{\lambda_{1},\ldots,\lambda_{2g-1}}:=\{[x:y:z] \in {\mathbb P}^{2}: y^{2}z^{2g-1}=x(x-z)\prod_{j=1}^{2g-1}(x-\lambda_{j}z)\}$, where $\lambda_{j} \in {\mathbb C}\setminus \{0,1\}$ and $\lambda_{i} \neq \lambda_{j}$ for $i \neq j$ (this curve is singular at the point at infinity, so one needs to make a desingularization process to obtain a smooth one). 
For the non-hyperelliptic ones, necessarily of genus $g \geq 3$, each basis of its $g$-dimensional space of holomorphic one-forms produces a holomorphic embedding of it into ${\mathbb P}^{g-1}$ as a non-singular irreducible complex projective curve of degree $2g+2$, called a ``canonical" curve. For genus three, this canonical curve is a non-singular quartic plane curve and, for genus four (with some exceptions), they are a complete intersection of a cubic and a quadric hypersurfaces in ${\mathbb P}^{3}$. For $g \geq 5$, Petri's theorem \cite{Saint} asserts that, if the surface is non-trigonal and neither a plane quintic, then the canonical curve is a complete intesection of $(g-3)(g-2)/2$ quadric hypersurfaces. The first case is $g=5$, in which case we obtain three quadrics.

In \cite{Humbert}, Humbert described a two-dimensional family of degree $7$ genus five curves in ${\mathbb P}^{3}$, called classical Humbert curves. Later, in  \cite{Baker}, Baker rediscovered these curves related to a Weddle surface. Classical Humbert curves are exactly those 
closed Riemann surfaces $S$ admitting a (necessarily unique) group $H \cong {\mathbb Z}_{2}^{4}$ of conformal automorphisms with quotient orbifold $S/H$ of genus zero and with exactly five cone points (each one necessarily of order two).  Some facts about classical Humbert curves, mainly from the point of view of algebraic geometry, may be found in \cite{Accola, Edge, Varley}.  In \cite{Edge}, Edge observed that each classical Humber curve $S$ can be described as the complete intersection of three diagonal quadric hypersurfaces in ${\mathbb P}^{4}$: 
$$\left\{[x_{1}:x_{2}:x_{3}:x_{4}:x_{5}] \in {\mathbb P}^{4}: x_{1}^{2}+x_{2}^{2}+x_{3}^{2}+x_{4}^{2}+x_{5}^{2}=0,\right.$$
$$\left. a_{1}x_{1}^{2}+a_{2}x_{2}^{2}+a_{3}x_{3}^{2}+a_{4}x_{4}^{2}+a_{5}x_{5}^{2}=0, \; a_{1}^{2}x_{1}^{2}+a^{2}_{2}x_{2}^{2}+a_{3}^{2}x_{3}^{2}+a_{4}^{2}x_{4}^{2}+a_{5}^{2}x_{5}^{2}=0\right\},$$
where $a_{1},\ldots,a_{5} \in {\mathbb C}$ are different (in this model, $H=\langle a_{1},\ldots,a_{4}\rangle$, where $a_{j} \in {\rm PGL}_{5}({\mathbb C})$ is given by multiplication by $-1$ at coordinates $x_{j}$). We may identify 
the quotient orbifold $S/H$ with the Riemann sphere $\widehat{\mathbb C}$ and its five cone points (up to a M\"obius transformation) with $\infty$, $0$, $1$, $\lambda_{1}$ and $\lambda_{2}$. In \cite{CGHR}, it was observed that $S$ can be described by  the much simple set of quadrics (and the same representation for $H$ as above):
$$\left\{[x_{1}:x_{2}:x_{3}:x_{4}:x_{5}] \in {\mathbb P}^{4}: x_{1}^{2}+x_{2}^{2}+x_{3}^{2}=0, \; \lambda_{1} x_{1}^{2}+x_{2}^{2}+x_{4}^{2}=0, \; \lambda_{2}x_{1}^{2}+x_{2}^{2}+x_{5}^{2}=0\right\}.$$

Generalizing \cite{Edge}, in \cite{Edge2}, Edge considered curves obtained as the complete intersection of $n-1$ quadrics in ${\mathbb P}^{n}$. We are interested in some particular cases of them.

A closed Riemann surface $S$ is called a {\it generalized Humbert curve of type $n \geq 3$} if it admits a group ${\mathbb Z}_{2}^{n} \cong H \leq {\rm Aut}(S)$ such that the quotient orbifold $S/H$ has genus zero and exactly $(n+1)$ cone points (each one necessarily of order $2$). We say that $H$ is a {\it generalized Humbert group of type $n$} and that $(S,H)$ a {\it generalized Humbert pair of type $n$}.  In this case, by the Riemann-Hurwitz formula, $S$ has genus $g_{n}=1+2^{n-2}(n-3)$. In particular, for $n=3$ we obtain a genus one Riemann surface and $n=4$ corresponds to the classical Humbert curves.  If $n \geq 4$, then $g_{n} \geq 5$, $S$ is non-hyperelliptic \cite{Hidalgo} and $H$ is the unique generalized Humbert group of type $n$ of $S$ \cite{HKLP}.
In \cite{CHQ}, there were obtained an explicit isogenous decomposition of the jacobian variety $JS$, where its factors are either elliptic or the jacobian variety of certain hyperelliptic (branched) quotients of $S$ (each of them being a two-fold branched cover over $\widehat{\mathbb C}$ and whose branch values are contained inside the set of cone points of $S/H$).

Let $(S,H)$ be a generalized Humbert pair of type $n \geq 4$. A description of those non-trivial subgroups $K$ of $H$, acting freely on $S$, is given in Section \ref{libre} (Theorem \ref{lema1}) and also an algebraic description for the corresponing quotient $S/K$ is stated. 

Our first main result, Theorem \ref{explicito}, is a description of the subgroups $K$ of $H$, acting freely on $S$, and such that $S/K$ is hyperelliptic. For each possible situation, we provide a corresponding hyperelliptic algebraic equation.
In Section \ref{Ejemplo}, we make the above explicit for the case $n=4$ (i.e., for classical Humbert curves). 
Our second result, Theorem \ref{main}, describes some relations between different parametrizing spaces associated to these hyperelliptic surfaces and the moduli space of generalized Humbert curves.

\section{Generalized Humbert curves of type $n \geq 4$}\label{Sec:uniformiza}
In this section, $(S,H)$ will denote a generalized Humbert pair of type $n \geq 4$.  We may identify the quotient orbifold $S/H$ with the Riemann sphere $\widehat{\mathbb C}$ and its $n+1$ cone points, each one of order two, with a collection $\{\infty, 0, 1, \lambda_{1},\ldots, \lambda_{n-2}\} \subset \widehat{\mathbb C}$. That choice for the cone points is unique up to the action of elements of ${\rm PSL}_{2}({\mathbb C})$ sending three of them to $\infty, 0, 1$. Let $\pi:S \to \widehat{\mathbb C}$ be a regular (branched) cover map, with $H$ as its deck group, and whose branch values are the chosen cone points.

\subsection{Fuchsian uniformization} 
The uniformization theorem asserts the existence of 
a co-compact Fuchsian group $\Gamma_{n}$, of conformal automorphisms of the hyperbolic plane 
${\mathbb H}^{2}$, with a presentation
$\Gamma_{n}=\langle x_{1},..., x_{n+1}: x_{1}^{2}=\cdots=x_{n+1}^{2}=x_{1}x_{2}\cdots x_{n+1}=1\rangle$,
such that 
$S/H$ is conformally equivalent (as orbifolds) to  ${\mathbb H}^{2}/\Gamma_{n}$. As the derived subgroup  
$\Gamma'_{n}$ of $\Gamma_{n}$ is torsion free (by results due to Maclachlan \cite{Maclachlan}), the quotient
$X_{n}:={\mathbb H}^{2}/\Gamma'_{n}$ is a closed Riemann surface 
with $H_{n}:=\Gamma_{n}/\Gamma'_{n} \cong {\mathbb Z}_{2}^{n} < Aut({\mathbb H}^{2}/\Gamma'_{n})$ such that $X_{n}/H_{n}={\mathbb H}^{2}/\Gamma_{n}$ (so,  conformally equivalent to $S/H$). This permits to see that there is a biholomorphism $\phi:S \to X_{n}$ conjugating $H$ onto $H_{n}$ \cite{CGHR}.
This in particular asserts that any two generalized Fermat pairs of the same type are topologically equivalent.

\begin{rema}
Let $M<{\rm PSL}_{2}({\mathbb C})$ be the (finite) group of those M\"obius transformations keeping invariant the collection $\{\infty,0,1,\lambda_{1},\ldots,\lambda_{n-2}\}$. As $H$ is a normal subgroup of ${\rm Aut}(S)$ \cite{HKLP}, there is a natural homomorphism $\theta:{\rm Aut}(S) \to M$, whose kernel is $H$. As previously observed, 
 if $\Gamma<{\rm PSL}_{2}({\mathbb R})$ is a Fuchsian group such that $S/H={\mathbb H}^{2}/\Gamma$, then $S={\mathbb H}^{2}/\Gamma'$ and $H=\Gamma/\Gamma'$, where $\Gamma'$ is the derived subgroup of $\Gamma$. In particular, every (orbifold) automorphism of the orbifold $S/H$ lifts to an automorphism of $S$. So,  $\theta$ is also surjective and we have a short exact sequence
$1 \to H \to {\rm Aut}(S) \stackrel{\theta}{\to} M \to 1$, which permits to compute explicitly ${\rm Aut}(S)$ (see \cite{GHL}). 
The fact that $S$ is uniformized by $\Gamma'$ asserts that  $S$ is the highest abelian regular branched cover of the orbifold $S/H$ \cite{CGHR,GHL} (see also Section \ref{Sec:uniformiza}). So, if $R$ is a closed Riemann surface admitting an abelian group $G$ of conformal automorphisms such that there is some biholomorphism $\alpha:S/H \to R/G$ of orbifolds (i.e., a biholomorphism of the underlying Riemann surfaces structures sending the cone points to cone points and preserving their cone orders), then there is a subgroup $K \cong {\mathbb Z}_{2}^{m}$ of $H$, acting freely on $S$, and a biholomorphism $\beta:S/K \to R$ conjugating $H/K \cong {\mathbb Z}_{2}^{n-m}$ to $G$. 
\end{rema}

\subsection{Algebraic description}\label{algebra}
Let us consider the non-singular projective algebraic curve 
\begin{equation}\label{eq2}
C(\lambda_{1},...,\lambda_{n-2})=\left\{ \begin{array}{ccc}
x_{1}^{2}+x_{2}^{2}+x_{3}^{2}&=&0\\
\lambda_{1}x_{1}^{2}+x_{2}^{2}+x_{4}^{2}&=&0\\
\vdots \\
\lambda_{n-2}x_{1}^{2}+x_{2}^{2}+x_{n+1}^{2}&=&0
\end{array}
\right\} \subset {\mathbb P}^{n}.
\end{equation}

Each of the linear transformations $$a_{j}[x_{1}: \cdots :x_{n+1}]=[x_{1}: \cdots : x_{j-1} : -x_{j} : x_{j+1} : \cdots : x_{n+1}], \; j=1,\ldots, n+1,$$
induces, by restriction, an automorphism of $C(\lambda_{1},...,\lambda_{n-2})$ and, in particular,  the group $H_{0}=\langle a_{1},\ldots, a_{n}\rangle \cong {\mathbb Z}_{2}^{n}$ is a grup of automorphisms of it. We call $\{a_{1},\ldots,a_{n+1}\}$ is the set of the {\it standard generators} of $H_{0}$. Note that 
$a_{1}a_{2}\cdots a_{n}a_{n+1}=1$.   These standard generators are exactly those 
non-trivial elements of $H_{0}$ having fixed points on $C(\lambda_{1},...,\lambda_{n-2})$.
The rational map 
$\pi_{0}:C(\lambda_{1},...,\lambda_{n-2}) \to \widehat{\mathbb C}: [x_{1}: \cdots : x_{n+1}] \mapsto -(x_{2}/x_{1})^{2}$
defines a regular (branched) cover, with deck group $H_{0}$, whose branch values are $\{\infty,0,1,\lambda_{1},\ldots,\lambda_{n-2}\}$. It follows that $(C(\lambda_{1},...,\lambda_{n-2}),H_{0})$ is a generalized Humbert pair of type $n$. In \cite{CGHR} it was observed that there is a biholomorphism $\psi:S \to C(\lambda_{1},...,\lambda_{n-2})$ conjugating $H$ onto $H_{0}$ (and such that $\pi_{0} \circ \psi=\pi$).

If we denote by $Fix(a_{j})$ the locus of fixed points of $a_{j}$ on $C(\lambda_{1},...,\lambda_{n-2})$, then $Fix(a_{j})=C(\lambda_{1},...,\lambda_{n-2}) \cap \{x_{j}=0\}$, and
$\pi_{0}(Fix(a_{1}))=\infty$, $\pi_{0}(Fix(a_{2}))=0$, $\pi_{0}(Fix(a_{3}))=1$, $\pi_{0}(Fix(a_{j}))=\lambda_{j-3}$, for $j=4,...,n+1$.

\begin{rema}
As, for $n \geq 5$,  $n < g_{n}-1$, it holds that $C(\lambda_{1},\ldots,\lambda_{n-2})$ is not a canonical curve. Nevertheless, in \cite{Hidalgo:bases} it was noted that such a curve is a projection of a suitable canonical curve obtained by forgetting some coordinates.
\end{rema}

\subsection{Moduli of generalized Fermat curves}\label{Sec:moduli}
The connected open set 
$$V_{n}=\{(\lambda_{1},...,\lambda_{n-2}) \in {\mathbb C}^{n-2}: \lambda_{1},...,\lambda_{n-2} \in {\mathbb C}-\{0,1\}, \quad \lambda_{j} \neq \lambda_{r}, j \neq r\} \subset {\mathbb C}^{n-2},$$ 
admits the analytic automorphisms:
$$t(\lambda_{1},...,\lambda_{n-2})=\left(\frac{\lambda_{n-2}}{\lambda_{n-2}-1}, \frac{\lambda_{n-2}}{\lambda_{n-2}-\lambda_{1}},...,\frac{\lambda_{n-2}}{\lambda_{n-2}-\lambda_{n-3}}\right), \;
b(\lambda_{1},...,\lambda_{n-2})=\left(\frac{1}{\lambda_{1}},...,\frac{1}{\lambda_{n-2}}\right).$$

It is possible to note that ${\mathbb G}_{n}=\langle t,b\rangle \cong {\mathfrak S}_{n+1}$, the permutation on the $(n+1)$ cone points.

\begin{propo}[\cite{GHL}]
Let $\vec{\lambda}:=(\lambda_{1},...,\lambda_{n-2}), \vec{\delta}:=(\delta_{1},...,\delta_{n-2}) \in V_{n}$. Then the generalized Humbert curves 
$C(\lambda_{1},...,\lambda_{n-2})$ and $C(\delta_{1},...,\delta_{n-2})$ are conformally equivalent
 if and only if $\vec{\lambda}$ and $\vec{\delta}$ belong to the same ${\mathbb G}_{n}$-orbit. In particular, the quotient orbifold $V_{n}/{\mathbb G}_{n}$ is a model for the moduli space ${\mathcal H}_{n}$ of generalized Humbert curves of type $n$.
\end{propo}

\begin{rema}
If ${\mathcal T}(S)$ is the Teichm\"uller space of  $S$ and $Mod(S)$ is its modular group, then ${\mathcal M}_{g_{n}}={\mathcal T}(S)/{\rm Mod}(S)$ is its moduli space. Let us consider the homotopy class of $H$ inside ${\rm Mod}(S)$, which we still denoting by $H$, and let ${\mathcal T}_{H}(S) \subset {\mathcal T}(S)$ be the locus of its fixed points. If $N(H)<{\rm Mod}(S)$ is the normalizer of $H$ inside ${\rm Mod}(S)$, then $\widetilde{\mathcal H}_{n}={\mathcal T}_{H}(S)/N(H)$ is the normalization of the moduli space ${\mathcal H}_{n} \subset {\mathcal M}_{g_{n}}$. 
As noted above, any two generalized Humbert pairs, $(S_{1},H_{1})$ and $(S_{2},H_{2})$, of the same type $n$ are topologically equivalent, that is, there is an orientation-preserving homeomorphism $h:S_{1} \to S_{2}$ so that $h H_{1} h^{-1}=H_{2}$. It follows that $\widetilde{\mathcal H}_{n}$ is isomorphic to ${\mathcal H}_{n}$ (which is also isomorphic to ${\mathcal M}_{0,n+1}$, the moduli space of the $n+1$ punctured sphere) \cite{GHL}. 
\end{rema}

\section{Acting freely subgroups of the generalized Humbert group}\label{libre}
In this section, $S=C(\lambda_{1},\ldots,\lambda_{n-2})$ is a generalized Humbert curve of type $n \geq 4$, $H=H_{0}$ is its generalized Humbert group of type $n \geq 4$ (which is known to be unique \cite{HKLP}), and $\{a_{1},...,a_{n+1}\}$ its set of standard generators. In this section we provides a description of those non-trivial subgroups $K$ of $H$ acting freely on $S$ and equations for $S/K$.

\subsection{Subgroups acting freely}
For each $r \in \{1,\ldots,n-1\}$, set ${\mathbb Z}_{2}^{r}=\{u_{0}=1,u_{1},\ldots,u_{2^{r}-1}\}$ and let
${\mathcal F}_{r}^{n}$ be the collection of tuples  $(I_{1},\ldots,I_{2^{r}-1})$, where $\{I_{1},\ldots,I_{2^{r}-1}\}$ is a (disjoint) partition of the set $\{1,\ldots,n+1\}$ (some of the $I_{j}$ might be the empty set) such that: (i) $u_{1}^{\#I_{1}}\cdots u_{2^{r}-1}^{\#I_{2^{r}-1}}=1$ and (ii) $\langle u_{k}: I_{k}\neq \emptyset, k \in \{1,\ldots,2^{r}-1\}\rangle={\mathbb Z}_{2}^{r}$.

Note that, as $n+1 =\sum_{k=1}^{2^{r}-1} \#I_{k}$, it follows that, for $r=1,2$, at least one of the $I_{k}$ has cardinality at least two.
It might be that some ${\mathcal F}_{r}^{n}$ is the empty set. Set $A_{n}:=\{r \in \{1,\ldots,n-1\}:  {\mathcal F}_{r}^{n} \neq \emptyset\}$. 

If $r \in A_{n}$ and $P=(I_{1},\ldots,I_{2^{r}-1}) \in {\mathcal F}_{r}^{n}$, then we consider the  surjective homomorphism $\rho_{r}:H \to {\mathbb Z}_{2}^{r}$, defined by $\rho_{r}(a_{j})=u_{k}$ ($j \in I_{k}$), and set $K_{P} \cong {\mathbb Z}_{2}^{n-r}$ as its kernel. It can be seen that $K_{P}$ acts freely on $S$. 
This provides one direction of the next description.

\begin{theo}\label{lema1}
 If $\{I\} \neq K<H$ acts freely on $S$, then $K=K_{P}$ for suitable $r \in A_{n}$ and $P \in {\mathcal F}_{r}^{n}$.
\end{theo}
\begin{proof}
Each subgroup $K$ of $H$ is the kernel of a suitable surjective homomorphism $\rho:H \to {\mathbb Z}_{2}^{r}:=\{u_{0}=1,u_{1},\ldots,u_{2^{r}-1}\}$. If (i) $K$ is non-trivial, then $r \leq n-1$, and (ii) (as the only elements of $H$ acting with fixed points on $S$ are its standard generators) $K$ is torsion free if and only if $\rho(a_{j}) \neq 1$, for every $j=1,\ldots,n+1$ (in particular, $r \geq 1$). 
If we set $I_{k}=\{j \in \{1,\ldots,n+1\}: \rho(a_{j})=u_{k}\}$, for $k=1,\ldots,2^{r}-1$, then the condition (ii) ensures that $K$ is torsion free if
 $\{I_{1},\ldots,I_{2^{r}-1}\}$ is a (disjoint) partition of the set $\{1,\ldots,n+1\}$. Some of the $I_{k}$ might be the empty set.
  Also, as $a_{1}\cdots a_{n+1}=1$, we must have $u_{1}^{\#I_{1}}\cdots u_{2^{r}-1}^{\#I_{2^{r}-1}}=1$, and the surjectivity condition asserts $\langle u_{k}: I_{k}\neq \emptyset, k \in \{1,\ldots,2^{r}-1\}\rangle={\mathbb Z}_{2}^{r}$.\end{proof}

\begin{coro}\label{coro:m=n-1}
If $K \cong {\mathbb Z}_{2}^{n-1}$ is a subgroup of $H$, acting freely on $S$, then $n$ is odd. Moreover, in this situation, $K=\langle a_{1}a_{2}, a_{1}a_{3}, \ldots, a_{1}a_{n+1}\rangle$ and $S/K$ is the hyperelliptic Riemann surface defined by the curve $y^{2}z^{2g-1}=x(x-z)\prod_{j=1}^{n-2}(x-\lambda_{j}z)$.
\end{coro}
\begin{proof}
If $r=1$, then we are just considering the partition $I_{1}=\{1,\ldots,n+1\}$ and ${\mathbb Z}_{2}=\{1,u_{1}\}$. The condition $u_{1}^{\#I_{1}}=u_{1}^{n+1}=1$ is equivalent for $n$ to be odd. Now, under this condition on $n$, we obtain $K=\langle a_{1}a_{2}, a_{1}a_{3}, \ldots, a_{1}a_{n+1}\rangle$. As the Riemann surface $S/K$ is a two-fold branched cover of $S/H$, we obtain that $S$ is an hyperelliptic curve as described.
\end{proof}

\subsection{Algebraic curves}
Let $P:=(I_{1},\ldots,I_{2^{r}-1}) \in {\mathcal F}_{r}^{n}$, where $r \in A_{n}$, and $K_{P}$ the corresponding (non-trivial) subgroup of $H$ acting freely on $S$. Classical geometric invariant theory \cite{Mumford} permits to construct an algebraic curve model for $S/K_{P}$. For it, consider the affine model $S^{0}$ of $C(\lambda_{1},\ldots,\lambda_{n-2})$ obtained by setting the variable $x_{n+1}=1$. Next, we obtain a set of generators of the algebra of invariants ${\mathbb C}[x_{1},\ldots,x_{n}]^{K_{P}}$ (which is known to be finitely generated by Hilbert-Noether's theorem \cite{Noether1}). As the elements of $K_{P}$ are diagonal matrices, we may find such a set of generators formed by monomials. 
As a non-trivial element of $K_{P}$ is a diagonal matrix, whose diagonals is formed by values equal to $\pm 1$, some 
of these  invariant monomials are given by $t_{1}:=x_{1}^{2},\ldots, t_{n}:=x_{n}^{2}$,  and others will have the form $s_{k}=x_{1}^{l_{1,k}}x_{2}^{l_{2,k}}\cdots x_{n}^{l_{n,k}}$, for suitables $l_{j,k} \in \{0,1\}$, $k=1,\ldots,m_{P}$. 
In this case, if $\psi:S^{0} \to {\mathbb C}^{n+m_{P}}$ is defined by $\psi(x_{1},\ldots,x_{n})=(t_{1},\ldots,t_{n},s_{1},\ldots,s_{m_{P}})$, then the (possible singular) curve $\psi(S^{0})\subset {\mathbb C}^{n+m_{P}}$, which is defined by
\begin{equation}\label{curvita}
\left\{\begin{array}{c}
t_{1}+t_{2}+t_{3}=0, \; \lambda_{1}t_{1}+t_{2}+t_{4}=0, \ldots, \lambda_{n-3}t_{1}+t_{2}+t_{n}, \; \lambda_{n-2}t_{1}+t_{2}+1=0,\\
s_{k}^{2}=\prod_{j =1}^{n}t_{j}^{l_{j,k}}, \; k=1,\ldots,m_{P}.
\end{array}\right\},
\end{equation}
produces an affine model for $S/K_{P}$.  From the first line (of the above set of equations), we observe that 
$$t_{2}=-(1+\lambda_{n-2}t_{1}), \; t_{3}=1+(\lambda_{n-2}-1)t_{1}, \; t_{4}=1+(\lambda_{n-2}-\lambda_{1})t_{1}, \ldots, t_{n}=1+(\lambda_{n-2}-\lambda_{n-3})t_{1}.$$
Replacing the above in the equations on the second line of \eqref{curvita}, we obtain the following affine equations for $S/K_{P}$: 
\begin{equation}
\left\{\begin{array}{c}
s_{k}^{2}=\prod_{j =1}^{n}t_{j}^{l_{j,k}}; \; k=1,\ldots,m_{P}.
\end{array}\right\} \subset {\mathbb C}^{1+m}.
\end{equation}

\subsection{Some examples}\label{ejemplitos}

\subsubsection{}
Let $n \geq 4$ and $r=2$. We consider $P=(I_{1},I_{2},I_{3})$, where $\#I_{1}=n-1$.
\begin{enumerate}
\item If $n \geq 4$ is even,  $\#I_{2}=\#I_{3}=1$. Up to permutation of indices, we may assume 
$$I_{1}=\{1,\ldots,n-1\}, \; I_{2}=\{n\}, \; I_{3}=\{n+1\}.$$

\item If $n \geq 5$ is odd, $\#I_{2}=2$ and  $\#I_{3}=0$. Up to permutation of indices, we may assume 
$$I_{1}=\{1,\ldots,n-1\}, \; I_{2}=\{n,n+1\}.$$

\end{enumerate}

In any of these two cases, $K_{P}=\langle a_{1}a_{2},a_{1}a_{3},\ldots,a_{1}a_{n-1}\rangle \cong {\mathbb Z}_{2}^{n-2}$. The $K_{P}$-invariant monomials are $t_{1}=x_{1}^{2},\ldots,t_{n}=x_{n}^{2}$, $s_{1}=x_{1}x_{2}\cdots x_{n-1}$ and $s_{2}=x_{n}$. In this way, $S/K_{P}$ is the hyperelliptic Riemann surface (of genus $n-2$) defined by
$$S/K_{P}: \left\{\begin{array}{l}
s_{1}^{2}= -t_{1}(1+\lambda_{n-2}t_{1})(1+(\lambda_{n-2}-1)t_{1})\prod_{j=1}^{n-4}(1+(\lambda_{n-2}-\lambda_{j})t_{1})\\
s_{2}^{2}=1+(\lambda_{n-2}-\lambda_{n-3})t_{1}
\end{array}
\right\}.
$$

The group $G=H/K_{P} \cong {\mathbb Z}_{2}^{2}$ is generated by $A(t_{1},s_{1},s_{2})=(t_{1},-s_{1},s_{2})$ and $B(t_{1},s_{1},s_{2})=(t_{1},s_{1},-s_{2})$, where $A$ corresponds to the hyperelliptic involution.

\subsubsection{}
Let $n=5$, $r=2$ and $P=(I_{1},I_{2},I_{3})$ where $\#I_{j}=2$, for $j=1,2,3$. Up to permutation of the indices, assume 
$I_{1}=\{1,2\}, \; I_{2}=\{3,4\}, \; I_{3}=\{5,6\}.$
In this case, $K_{P}=\langle a_{1}a_{2}, a_{3}a_{4}, a_{1}a_{3}a_{5}\rangle \cong {\mathbb Z}_{2}^{3}$. The $K_{P}$-invariant monomials are $t_{1}=x_{1}^{2},\ldots,t_{5}=x_{5}^{2}$, $s_{1}=x_{1}x_{2}x_{5}$, $s_{2}=x_{3}x_{4}x_{5}$ and $s_{3}=x_{1}x_{2}x_{3}x_{4}$. In this way, $S/K_{P}$ is the Riemann surface of genus three (hyperelliptic if, for instance, $\lambda_{1}\lambda_{2}=\lambda_{3}$) defined by
$$S/K_{P}:\left\{\begin{array}{l}
s_{1}^{2}=-t_{1}(1+\lambda_{3}t_{1})(1+(\lambda_{3}-\lambda_{2})t_{1})\\
s_{2}^{2}=(1+(\lambda_{3}-1)t_{1})(1+(\lambda_{3}-\lambda_{1})t_{1})(1+(\lambda_{3}-\lambda_{2})t_{1})\\
s_{3}^{2}=-t_{1}(1+\lambda_{3}t_{1})(1+(\lambda_{3}-1)t_{1})(1+(\lambda_{3}-\lambda_{1})t_{1})
\end{array}
\right\}.
$$

The group $G=H/K_{P} \cong {\mathbb Z}_{2}^{2}$ is generated by $A(t_{1},s_{1},s_{2},s_{3})=(t_{1},-s_{1},s_{2},-s_{3})$ and $B(t_{1},s_{1},s_{2},s_{3})=(t_{1},s_{1},-s_{2},-s_{3})$. In this case, $A$, $B$ and $AB$ each one has $4$ fixed points.

In the particular case $\lambda_{3}=\lambda_{1}/\lambda_{2}$, it happens that $C(\lambda_{1},\lambda_{2},\lambda_{3})$ admits the conformal involution
$T([x_{1}:\cdots:x_{6}])= [x_{2}:\sqrt{\lambda_{1}} x_{1}: x_{4}: \sqrt{\lambda_{1}} x_{3}: \sqrt{\lambda_{2}} x_{6}: \sqrt{\lambda_{3}} x_{5}],$
which satisfies $\pi \circ T=\tau \circ \pi$, where $\tau(x)=\lambda_{1}/x$. If we chose the square roots such that $\sqrt{\lambda_{1}}=\sqrt{\lambda_{2}} \sqrt{\lambda_{3}}$, then $T^{2}=I$. As $T$ normalizes $K_{P}$, it induces a conformal involution $E$ of $S/K_{P}$, which is given by
$E(t_{1},s_{1},s_{2},s_{3})=(\lambda_{2} t_{2}/\lambda_{1}t_{5}, \lambda_{2}^{2}s_{1}/\lambda_{1} t_{5}^{2}, \lambda_{2}^{2} s_{2}/\lambda_{1} t_{5}^{2}, \lambda_{2}^{2}s_{3}/\lambda_{1} t_{5}^{2})$, where 
$t_{2}=-(1+\lambda_{3}t_{1})$ and $t_{5}=1+(\lambda_{3}-\lambda_{2})t_{1}$. It can be checked that $E$ is hyperelliptic involution of $S/K_{P}$.

\subsubsection{}
Let $n \geq 4$, $r=3$, ${\mathbb Z}_{2}^{3}=\langle u_{1}, u_{2}, u_{3}\rangle$ and $u_{4}=u_{1}u_{2}$, $u_{5}=u_{2}u_{3}$, $u_{6}=u_{1}u_{3}$ and $u_{7}=u_{1}u_{2}u_{3}$. We consider $P=(I_{1},\ldots,I_{7})$, where $\#I_{1}=n-2$ and
\begin{enumerate}
\item if $n \geq 4$ is even,  $\#I_{2}=\#I_{3}=\#I_{5}=1$ and $\#I_{4}=\#I_{6}=\#I_{7}=0$. Up to permutation of indices, we may assume 
$I_{1}=\{1,\ldots,n-2\}, \; I_{2}=\{n-1\}, \; I_{3}=\{n\}, \; I_{5}=\{n+1\}.$

\item if $n \geq 5$ is odd, $\#I_{2}=\#I_{3}=\#I_{7}=1$ and  $\#I_{4}=\#I_{5}=\#I_{6}=0$. Up to permutation of indices, we may assume 
$I_{1}=\{1,\ldots,n-2\}, \; I_{2}=\{n-1\}, \; I_{3}=\{n\}, \; I_{7}=\{n+1\}.$

\end{enumerate}

In any of these two cases, $K_{P}=\langle a_{1}a_{2},a_{1}a_{3},\ldots,a_{1}a_{n-2}\rangle \cong {\mathbb Z}_{2}^{n-3}$. The $K_{P}$-invariant monomials are $t_{1}=x_{1}^{2},\ldots,t_{n}=x_{n}^{2}$, $s_{1}=x_{1}x_{2}\cdots x_{n-2}$, $s_{2}=x_{n-1}$ and $s_{3}=x_{n}$. In this way, $S/K_{P}$ is the hyperelliptic Riemann surface (of genus $2n-5$) defined by
$$
S/K_{P}: \left\{\begin{array}{l}
s_{1}^{2}= -t_{1}(1+\lambda_{n-2}t_{1})(1+(\lambda_{n-2}-1)t_{1})\prod_{j=1}^{n-5}(1+(\lambda_{n-2}-\lambda_{j})t_{1})\\
s_{2}^{2}=1+(\lambda_{n-2}-\lambda_{n-4})t_{1}\\
s_{3}^{2}=1+(\lambda_{n-2}-\lambda_{n-3})t_{1}
\end{array}
\right\}.
$$

The group $G=H/K_{P} \cong {\mathbb Z}_{2}^{3}$ is generated by $A(t_{1},s_{1},s_{2},s_{3})=(t_{1},-s_{1},s_{2},s_{3})$, $B(t_{1},s_{1},s_{2},s_{3})=(t_{1},s_{1},-s_{2},s_{3})$ and $C(t_{1},s_{1},s_{2},s_{3})=(t_{1},s_{1},s_{2},-s_{3})$, where $A$ corresponds to the hyperelliptic involution.
\section{Hyperelliptic quotients}\label{hiper}
In this section, for $S=C(\lambda_{1},\ldots,\lambda_{n-2})$, where $n \geq 4$, 
we provide a description of those subgroups $K$ of $H$ such that $S/K$ is hyperelliptic.

\subsection{A known fact}
It is a well known fact that if  $R$ is a hyperelliptic Riemann surface admitting an abelian group $G \cong {\mathbb Z}_{2}^{r}$ of conformal automorphisms such that $R/G$ has genus zero, then $r\in\{1,2,3\}$ and, moreover, if the number of cone points is odd, then $r\neq 1$. Next, we write such a fact in terms of generalized Humbert curves and we provide a short argument for completeness.

\begin{propo}\label{cocientes}
Let $(S,H)$ be a generalized Humbert pair of type $n \geq 4$.  If $K \cong {\mathbb Z}_{2}^{m}$ is  a subgroup of $H$ acting freely on $S$ such that $R=S/K$ is hyperelliptic, then: 
{\rm (1)}  $m \in \{n-3,n-2,n-1\}$, if $n$ is odd, and {\rm (2)} $m \in \{n-3,n-2\}$, if $n$ is even.
\end{propo}
\begin{proof}
As $H \cong {\mathbb Z}_{2}^{n}$ acts with fixed points, $m \in \{1,...,n-1\}$. Note that, for $n \geq 4$ even, Theorem \ref{lema1} asserts that $H$ has no subgroup isomorphic to ${\mathbb Z}_{2}$ acting freely on $S$ (as noted before the same proposition, $r>1$ for $n$ even), in particular, 
$m \leq n-2$ in this case. The group $H/K \cong {\mathbb Z}_{2}^{n-m}$ is a group of automorphisms of the hyperelliptic Riemann surface $R$. If $\iota$ denotes the hyperelliptic involution of $R$, then either (i) $\iota \in H/K$ or (ii) $\iota \notin H/K$. In the first case, $H/K$ induces an action of
${\mathbb Z}_{2}^{n-m-1}$ as a group of M\"obius transformations. In the second case, $H/K$ induces an action of
${\mathbb Z}_{2}^{n-m}$ as a group of M\"obius transformations.
As the only Abelian subgroups of ${\rm PSL}_{2}({\mathbb C})$ are cyclic or ${\mathbb Z}_{2}^{2}$, the above asserts $m \in \{n-3,n-2,n-1\}$.
\end{proof}

\begin{rema}
We should observe that there are subgroups $K \cong {\mathbb Z}_{2}^{n-2}$, acting freely on $S$, such that $S/K$ is non-hyperelliptic (similarly for $K \cong {\mathbb Z}_{2}^{n-3}$). In fact, 
set $n=7$ and let $a_{1},\ldots,a_{8} \in H$ be the standard generators. Consider the surjective homomorphism $\rho:H \to G=\langle u,v: u^{2}=v^{2}=(uv)^{2}=1\rangle \cong {\mathbb Z}_{2}^{2}$ defined by $\rho(a_{i})=u$ and $\rho(a_{j})=v$, where $i \in \{1,2,3,4\}$ and $j \in \{5,6,7,8\}$. The kernel $K$ of $\rho$ acts freely on $S$ and $R=S/K$ is a closed Riemann surface of genus five on which the group $G$ acts as a group of conformal automorphisms with $S/H=R/G$. The involutions $u$ and $v$ have, each one, exactly $8$ fixed points and the involution $uv$ acts freely on $S$. We claim that $R$ is non-hyperelliptic. In fact, if $R$ is hyperelliptic, then its  hyperelliptic involution $\iota$ has $12$ fixed points, so $\iota \notin G$. By projecting $\iota$ to the quotient orbifold $R/G$, we see that the induced involution must have exactly two fixed points and the $8$ cone points are permuted into $4$ pairs. It follows that $R/\langle G,\iota\rangle$ has genus zero with exactly $6$ cone points of order two; exactly $4$ of them being the projection of the fixed points of the elements of $G$. Now, by projecting on the orbifold $R/\langle \iota \rangle$, the group $G$ induces an isomorphic M\"obius group that permutes the $12$ cone points (and fixing no one of them). This asserts that $R/\langle G, \iota \rangle$ must be of genus zero and with exactly $8$ cone points; where $3$ of them are the projections of the fixed points of $G$; a contradiction with the previous. 
\end{rema}

\subsection{Explicit descriptions for hyperelliptic situation}
As seen in Proposition \ref{cocientes}, if $K \cong {\mathbb Z}_{2}^{m}$ is an acting freely subgroups of $H$ such that $R=S/K$ is hyperelliptic, then $m \in \{n-3,n-2,n-1\}$ (where, for $n$ even, the case $n-1$ is not possible) over which the group $G=H/K \cong {\mathbb Z}_{2}^{n-m}$ acts as a group of conformal automorphisms with $R/G=S/H$. Next, we proceed to describe all these subgroups $K$ together the corresponding hyperelliptic algebraic equations (see also the examples in Section \ref{ejemplitos}).

\begin{theo}\label{explicito}
Let $S=C(\lambda_{1},...,\lambda_{n-2})$, where $n \geq 4$, and let $a_{1},\ldots,a_{n+1}$ be the standard generators of its generalized Humbert group $H \cong {\mathbb Z}_{2}^{n}$. Let $K$ be a subgroup of $H$, acting freely on $S$ and such that $R=S/K$ is hyperelliptic. Then one of the following hold.
\begin{enumerate}
\item $n \geq 5$ is odd, $K=\langle a_{1}a_{2},a_{1}a_{3},\ldots,a_{1}a_{n}\rangle \cong {\mathbb Z}_{2}^{n-1}$ and  $$R:= \; y^{2}=x(x-1)(x-\lambda_{1})\cdots(x-\lambda_{n-2}).$$

\item $n \geq 4$ is even, $K=\langle a_{i_{1}} a_{i_{2}}, \ldots, a_{i_{1}} a_{i_{n-1}}\rangle \cong {\mathbb Z}_{2}^{n-2}$,
where $\{i_{1}, \ldots, i_{n-1}\} \subset \{1,\ldots,n+1\}$.
If $\{b_{1}, b_{2}\} \in \{\infty,0,1,\lambda_{1},\ldots,\lambda_{n-2}\}$ are the projection of the fixed points of the two involutions in $\{a_{1},\ldots,a_{n+1}\} \setminus \{a_{i_{1}},\ldots,a_{i_{n-1}}\}$ and $Q(z)=b_{1}+b_{2}/z^{2}$, then  $$R: \; y^{2}=\prod_{j=1}^{2(n-1)}(x-\mu_{j}), \; 
\{\mu_{1},...,\mu_{2(n-1)}\}=Q^{-1}(\{\infty,0,1,\lambda_{1},...,\lambda_{n-2}\}-\{b_{1},b_{2}\}).$$ 

\item $n=5$, up to the ${\mathbb G}_{5}$-action we may assume $\lambda_{2}\lambda_{3}=\lambda_{1}$, $K=\langle a_{i_{1}}a_{i_{2}}, a_{i_{3}}a_{i_{4}}, a_{i_{1}}a_{i_{3}}a_{i_{5}} \rangle \cong {\mathbb Z}_{2}^{3}$,
where $\{1,2,3,4,5,6\}=\{i_{1},i_{2},i_{3},i_{4},i_{5},i_{6}\}$ and 
$$R:\; y^{2}=(x^{2}-a^{2})(x^{2}-a^{-2})(x^{2}-b^{2})(x^{2}-b^{-2}),$$
where $a^{2},b^{2} \in {\mathbb C}-\{0,1,-1\}$ are such that
$a^{2}+a^{-2}=4Q(2\sqrt{\lambda_{1}})$, $b^{2}+b^{-2}=4Q(-2\sqrt{\lambda_{1}})$,
$Q(x)=((x+\lambda_{1}/x)-1-\lambda_{1})/(\lambda_{2}+\lambda_{3}-\lambda_{1}-1)$ and $\lambda_{2}\lambda_{3}=\lambda_{1}$.

\item $K=\langle a_{i_{1}}a_{i_{2}},a_{i_{1}}a_{i_{3}},\ldots,a_{i_{1}}a_{i_{n-2}}\rangle \cong {\mathbb Z}_{2}^{n-3}$, where $\{i_{1},\ldots,i_{n-2}\} \subset \{1,\ldots,n+1\}$.
Let $\{b_{1},b_{2},b_{3}\} \subset \{\infty,0,1,\lambda_{1},\ldots,\lambda_{n-2}\}$ be the complement of the projection of the fixed points of $\{a_{i_{1}},\ldots,a_{i_{n-2}}\}$, $T(z)=(z-b_{2})(b_{3}-b_{1})/(z-b_{1})(b_{3}-b_{2})$, $U(z)=((1+z^{2})/2z)^{2}$ and $Q(z)=U \circ T^{-1}(z)$. If 
$\{\mu_{1},...,\mu_{n-2}\}=\{\infty,0,1,\lambda_{1},...,\lambda_{n-2}\}-\{b_{1},b_{2},b_{3}\}$, then
$$R: \quad y^{2}=\prod_{j=1}^{n-2} (x^{4}+2(1-2\mu_{j})x^{2}+1).$$
\end{enumerate}
\end{theo}

\subsection{A remark for $K \cong {\mathbb Z}_{2}^{n-2}$}
If $n \geq 5$ is odd, then there is exactly one subgroup $K \cong {\mathbb Z}_{2}^{n-1}$ of $H$ acting freely on $S$. If $n \geq 4$, then subgroups of $H$ isomorphic to ${\mathbb Z}_{2}^{n-2}$ acting freely on $S$ may not be unique. The following relates them (we use this in the proof of Theorem \ref{main}).

\begin{lemm}\label{teo5}
Let $(S,H)$ be a generalized Humbert pair of type $n \geq 4$ (not necessarily even). If $K_{1}$ and $K_{2}$ are two subgroups of $H$, both isomorphic to ${\mathbb Z}_{2}^{n-2}$ and both acting freely on $S$, then both pairs $(S/K_{1},H/K_{1})$ and $(S/K_{2},H/K_{2})$ are conformally equivalent if and only if there is some $f \in Aut(S)$ so that $f K_{1} f^{-1}=K_{2}$.
\end{lemm}
\begin{proof} One direction is clear by the uniqueness of $H$. On the other direction, 
if there is a conformal homeomorphism $\phi:S/K_{1} \to S/K_{2}$ so that $\phi (H/K_{1}) \phi^{-1} = H/K_{2}$, then $\phi$ induces a conformal automorphism $\psi$ of $S/H=(S/K_{j})/(H/K_{j})$. As $S$ is the homology cover of $S/H$, this means that $\psi$ lifts to a conformal automorphism $f \in Aut(S)$ that conjugates $K_{1}$ to $K_{2}$.
\end{proof}

In the generic situation one has that ${\rm Aut}(S)=H$ (recall that ${\rm Aut}(S)/H$ is isomorphic to the group of M\"obius transformations keeping invariant the set of conical points of $S/H$). In this case, the above result asserts that if  $K_{1}$ and $K_{2}$ are two different subgroups of $H$, both isomorphic to ${\mathbb Z}_{2}^{n-2}$ and acting freely on $S$, then $(S/K_{1},H/K_{1})$ and $(S/K_{2},H/K_{2})$ are not conformally equivalent.

\section{Proof of Theorem \ref{explicito}}
We set $r=n-m \in \{1,2,3\}$.
 Let us consider a regular branched covering $P:S \to R$, induced by the action of $K$. As $S$ has genus $1+2^{n-2}(n-3)$ and $K$ acts freely on $S$, by the Riemann-Hurwitz formula, $R$ has genus $g_{R}=1+2^{r-2}(n-3)$.

\subsection{Case ${\bf K\cong {\mathbb Z}_{2}^{n-1}}$}
As in this case $r=1$, Corollary \ref{coro:m=n-1} asserts that $n \geq 5$ is odd and that 
$$K=\langle a_{1}a_{2},a_{1}a_{3},\ldots,a_{1}a_{n}\rangle, \quad  S/K:= \; y^{2}=x(x-1)(x-\lambda_{1})\cdots(x-\lambda_{n-2}).$$ 

\subsection{Case ${\bf K\cong {\mathbb Z}_{2}^{n-2}}$}
In this case $r=2$, $R$ has genus $n-2$ and  
$G=\{1,u_{1}=u, u_{2}=v, u_{3}=uv\}=\langle u, v: u^{2}=v^{2}=(uv)^{2}=1\rangle$. Theorem \ref{lema1} asserts that $2 \in A_{n}$ and that there is some $P=(I_{1},I_{2},I_{3}) \in {\mathcal F}_{2}^{n}$ such that $K$ is the kernel  of a homomorphism $\rho_{2}:H \to G$ defined by  
$$\rho_{2}(a_{j})=\left\{\begin{array}{ll}
u, & j \in I_{1},\\
v, & j \in I_{2},\\
uv, & j \in I_{3}.
\end{array}
\right.
$$

As $P \in {\mathcal F}_{2}^{n}$, we must have that $1=u^{\#I_{1}}v^{\#I_{2}}(uv)^{\#I_{3}}=u^{\#I_{1}+\#I_{3}}v^{\#I_{2}+\#I_{3}}$, that is, 
the sets $I_{1}, I_{2}, I_{3}$ have cardinalities of the same parity (i.e., all of them odd or all of them even).

The group $G\cong {\mathbb Z}_{2}^{2}$ acts as a group of conformal automorphisms of $R$ such that $R/G=S/H$, that is, the Riemann sphere with exactly $n+1$ cone points of order two. The automorphism $u$ has $2\#I_{1}$ fixed points, $v$ has $2\#I_{2}$ fixed points and $uv$ has $2\#I_{3}$ fixed points.  Also, $\#I_{1}+\#I_{2}+\#I_{3}=n+1$. 

Let us denote by $\iota$ the hyperelliptic involution of $R$. Either (i) $\iota \in G$ or (ii) $\iota \notin G$.

\subsubsection{\bf Case ${\bf \iota \in G}$}
One of the elements of $G$ must be $\iota$, so one of $I_{j}$ must have cardinality $n-1$ ($R$ has genus $n-1$). 
As  $\#I_{1}+\#I_{2}+\#I_{3}=n+1$ and the three have the same parity, we have the following.
 \begin{itemize}
\item[(a)] If $n \geq 4$ is even, then either: (i) $\#I_{1}=n-1$ and $\#I_{2}=\#I_{3}=1$ or (ii) $\#I_{2}=n-1$ and $\#I_{1}=\#I_{3}=1$ or (iii) $\#I_{3}=n-1$ and $\#I_{1}=\#I_{2}=1$.
\item[(b)] If $n \geq 5$ is odd, then either: (i) $\#I_{1}=n-1$, $\#I_{2}=2$ and $\#I_{3}=0$ or (ii) $\#I_{1}=n-1$, $\#I_{2}=0$ and $\#I_{3}=2$ or
(iii) $\#I_{2}=n-1$, $\#I_{1}=2$ and $\#I_{3}=0$ or (iv) $\#I_{2}=n-1$, $\#I_{1}=0$ and $\#I_{3}=2$ or (v) $\#I_{3}=n-1$, $\#I_{1}=2$ and $\#I_{2}=0$ or (vi) $\#I_{3}=n-1$, $\#I_{1}=0$ and $\#I_{2}=2$.
\end{itemize}

The above permits to see that the corresponding collection of subgroups $K$ of $H$ (a collection of cardinality $n(n+1)/2$) are of the form
$$K_{P}=K(\{i_{1}, \ldots,i_{n-1}\})=\langle a_{i_{1}} a_{i_{2}}, \ldots, a_{i_{1}} a_{i_{n-1}}\rangle \cong {\mathbb Z}_{2}^{n-2},$$
where 
$\{i_{1}, \ldots, i_{n-1}\} \subset \{1,\ldots,n+1\}$ of cardinality $n-2$ (this corresponds to the component set $I_{j}$ of $P$ of cardinality $n-2$). 
Let $\{b_{1}, b_{2}\} \in \{\infty,0,1,\lambda_{1},\ldots,\lambda_{n-2}\}$ be the projection of the fixed points of the two involutions in $\{a_{1},\ldots,a_{n+1}\} \setminus \{a_{i_{1}},\ldots,a_{i_{n-1}}\}$. Consider the $2$-fold branched cover 
$Q:\widehat{\mathbb C} \to \widehat{\mathbb C}$, defined by $Q(z)=b_{1}+b_{2}/z^{2}$. The 
critical points of $Q$ are $\infty$ and $0$, it is even i.e., $Q(-z)=Q(z)$, and $\{Q(\infty),Q(0)\}=\{b_{1},b_{2}\}$. Then 

$$S/K(\{i_{1}, \ldots, i_{n-1}\}):= \; y^{2}=\prod_{j=1}^{2(n-1)}(x-\mu_{j}),$$
where $\{\mu_{1},...,\mu_{2(n-1)}\}=Q^{-1}(\{\infty,0,1,\lambda_{1},...,\lambda_{n-2}\}-\{b_{1},b_{2}\})$.

\subsubsection{\bf Case ${\bf \iota \notin G}$} We will observe that $n=5$.
As $\iota \notin G$, then $\iota$ does not share a fixed point with any involution of $G$. By 
projecting $\iota$ to $R/G$, we obtain a conformal involution $\widehat{\iota}$ that permutes in pairs the $n+1$ cone points $\{\infty,0,1,\lambda_{1},\ldots, \lambda_{n-2}\}$ and fixes none of them. It follows that $n$ is odd. Up to a M\"obius transformation, we may assume that $\widehat{\iota}$ permutes $\infty$ with $0$, $1$ with $\lambda_{1}$ (so $\widehat{\iota}(x)=\lambda_{1}/x$) and $\lambda_{2j}$ with $\lambda_{2j+1}$, for $j=1,\ldots, (n-3)/2$. This asserts that on the genus zero quotient $R/\langle G, \iota \rangle$ we have only two cone points coming from the fixed points of $\iota$ and the others $(n+1)/2$ from the fixed points of $G$.
We may now consider a regular branched cover of degree two induced by $\iota$, say $T:R \to \widehat{\mathbb C}$, so that $G$ induces, under $T$, the group $\widehat{G}=\langle A(x)=-x, B(x)=1/x\rangle$. The $2n-2$ branch values of $T$ are permuted by $\widehat{G}$, none of them being fixed by an involution on it. In particular, on the quotient $R/\langle \iota,G\rangle$ we must have that only three of the cone points coming from the fixed points of $G$ and the others $(2n-2)/4$ from the fixed points of $\iota$.
It follows that $2=(2n-2)/4$ and $(n-1)/2=3$, which asserts that $n=5$, that is, $S$ has genus $17$,
$R$ has genus three and $G \cong {\mathbb Z}_{2}^{2}$ acts with quotient $R/G$ being the sphere with $6$ cone points of order two. In this case, $R$ can be described by a hyperelliptic curve
$$y^{2}=(x^{2}-a^{2})(x^{2}-a^{-2})(x^{2}-b^{2})(x^{2}-b^{-2}),$$
where $a^{2},b^{2} \in {\mathbb C}-\{0,1,-1\}$ are such that
$a^{2}+a^{-2}=4Q(2\sqrt{\lambda_{1}})$, $b^{2}+b^{-2}=4Q(-2\sqrt{\lambda_{1}})$, 
$Q(x)=((x+\lambda_{1}/x)-1-\lambda_{1})/(\lambda_{2}+\lambda_{3}-\lambda_{1}-1)$ and $\lambda_{2}\lambda_{3}=\lambda_{1}$.
In this case, $G$ is generated by $u(x,y)=(-x,y)$ and $v(x,y)=(1/x,y/x^{4})\rangle$.
This situation corresponds to have $I_{1}=\{i_{1},i_{2}\}$, $I_{2}=\{i_{3},i_{4}\}$, $I_{3}=\{i_{5},i_{6}\}$ and $K=\langle a_{i_{1}}a_{i_{2}}, a_{i_{3}}a_{i_{4}},a_{i_{1}}a_{i_{3}}a_{i_{5}} \rangle \cong {\mathbb Z}_{2}^{3}$, 
where $\{1,2,3,4,5,6\}=\{i_{1},i_{2},i_{3},i_{4},i_{5},i_{6}\}$.

\subsection{Case ${\bf K\cong {\mathbb Z}_{2}^{n-3}}$}
In this case $r=3$, $R$ has genus $2n-5$ and 
$G=\{1,u_{1}=u, u_{2}=v, u_{3}=w, u_{4}=uv, u_{5}=vw, u_{6}=uw, u_{7}=uvw\}=\langle u,v,w: u^{2}=v^{2}=w^{2}=(uv)^{2}=(uw)^{2}=(vw)^{2}=1\rangle$, 
Theorem \ref{lema1} asserts that $3 \in A_{n}$ and that there is some $P=(I_{1},\ldots,I_{7}) \in {\mathcal F}_{3}^{n}$ such that $K$ is the kernel  of a homomorphism $\rho_{3}:H \to G$ defined by  
$$\rho_{3}(a_{j})=\left\{\begin{array}{ll}
u, & j \in I_{1},\\
v, & j \in I_{2},\\
w, & j \in I_{3},\\
uv, & j \in I_{4},\\
vw, & j \in I_{5},\\
uw, & j \in I_{6},\\
uvw, & j \in I_{7}.
\end{array}
\right.
$$

As $P \in {\mathcal F}_{3}^{n}$, we must have that
$$(*)\left\{ \begin{array}{c}
\#I_{1}+\#I_{4}+\#I_{6}+\#I_{7} \equiv 0 \mod(2),\\
\#I_{2}+\#I_{4}+\#I_{5}+\#I_{7} \equiv 0 \mod(2),\\
\#I_{3}+\#I_{5}+\#I_{6}+\#I_{7} \equiv 0 \mod(2),
\end{array}
\right.
$$

In this case, the group $G=H/K \cong {\mathbb Z}_{2}^{3}$ acts as a group of conformal automorphisms of $R$ such that $R/G=S/H$. The involutions $u,v,w,uv,vw,uw,uvw$ have, respectively, $4\#I_{1},4\#I_{2},4\#I_{3},4\#I_{4},4\#I_{5},4\#I_{6},4\#I_{7}$ fixed points.  

We are assuming $R$ to be hyperelliptic. We claim that its hyperelliptic involution $\iota$ must belong to $G$. In fact, if that is not the case, then $G \cong {\mathbb Z}_{2}^{3}$ must induce an isomorphic group of M\"obius transformations on the quotient $R/\langle \iota \rangle$. This is a contradiction to the fact that the only finite abelian subgroups of ${\rm PSL}_{2}({\mathbb C})$ are the cyclic ones and ${\mathbb Z}_{2}^{2}$. 

We may assume that $u=\iota$, that is, $\#I_{1}=n-2$; so $\#I_{2}+\#I_{3}+\#I_{4}+\#I_{5}+\#I_{6}+\#I_{7}=3$. It follows from this and  $(*)$ that 
$$\begin{array}{ll}
(a) & n-2+\#I_{4}+\#I_{6}+\#I_{7} \equiv 0 \mod(2),\\
(b) & \#I_{2}+\#I_{4}+\#I_{5}+\#I_{7},\in \{0,2\}\\
(c) & \#I_{3}+\#I_{5}+\#I_{6}+\#I_{7} \in \{0,2\}.
\end{array}
$$

If $\#I_{2}+\#I_{4}+\#I_{5}+\#I_{7} =0$, then (from (c)) $\#I_{3}+\#I_{6}\in \{0,2\}$, which contradicts the fact that  $\#I_{2}+\#I_{3}+\#I_{4}+\#I_{5}+\#I_{6}+\#I_{7}=3$. Similarly, 
if $\#I_{3}+\#I_{5}+\#I_{6}+\#I_{7} =0$, then it agains privides a contradiction. In this way,
$$\begin{array}{ll}
(i) & n-2+\#I_{4}+\#I_{6}+\#I_{7} \equiv 0 \mod(2),\\
(ii) & \#I_{2}+\#I_{3}+\#I_{4}+\#I_{5}+\#I_{6}+\#I_{7}=3,\\
(iii) & \#I_{2}+\#I_{4}+\#I_{5}+\#I_{7} =2,\\
(iv) & \#I_{3}+\#I_{5}+\#I_{6}+\#I_{7} =2.\\
\end{array}
$$

It follows, by combining (ii) and (iii) and then (ii) with (iv), that $\#I_{3}+\#I_{6}=1=\#I_{2}+\#I_{4}$.
Then, by (ii) one also have that $\#I_{5}+\#I_{7}=1$ and, by (i) that  $n-2+\#I_{4}+\#I_{6}+\#I_{7}$ is even. As (by (ii)) 
$\#I_{4}+\#I_{6}+\#I_{7} \in \{0,1,2,3\}$, we observe that, 
for $n$ even, $\#I_{4}+\#I_{6}+\#I_{7}\in \{0,2\}$ and, for $n$ odd, $\#I_{4}+\#I_{6}+\#I_{7} \in \{1,3\}$.

Summarizing all the above:
\begin{enumerate}
\item If $n \geq 4$ is even, then $\#I_{1}=n-2$, and either:
\begin{enumerate}
\item $\#I_{4}=\#I_{6}=\#I_{7}=0$ and $\#I_{2}=\#I_{3}=\#I_{5}=1$.
\item $\#I_{4}=\#I_{3}=\#I_{5}=0$ and $\#I_{2}=\#I_{6}=\#I_{7}=1$.
\item $\#I_{2}=\#I_{5}=\#I_{6}=0$ and $\#I_{3}=\#I_{4}=\#I_{7}=1$.
\item $\#I_{2}=\#I_{3}=\#I_{7}=0$ and $\#I_{4}=\#I_{5}=\#I_{6}=1$.

\end{enumerate}

\item If $n \geq 5$ is odd, then $\#I_{1}=n-2$, and either:
\begin{enumerate}
\item  $\#I_{2}=\#I_{6}=\#I_{7}=0$ and $\#I_{4}=\#I_{3}=\#I_{5}=1$.
\item  $\#I_{3}=\#I_{4}=\#I_{7}=0$ and $\#I_{2}=\#I_{5}=\#I_{6}=1$.
\item $\#I_{4}=\#I_{5}=\#I_{6}=0$ and $\#I_{2}=\#I_{3}=\#I_{7}=1$. 
\item  $\#I_{2}=\#I_{3}=\#I_{5}=0$ and $\#I_{4}=\#I_{6}=\#I_{7}=1$.
\end{enumerate}
\end{enumerate}

In each of the the above cases (a)-(d), for either $n$ even or odd, we have $n(n^{2}-1)/6$ possible tuples $P=(I_{1},\ldots,I_{7}) \in {\mathcal F}_{3}^{n}$, and for each of them we have the corresponding group $K=K_{P}$. 
If, for such a tuple $P$, we have $I_{1}=\{i_{1},\ldots,i_{n-2}\}$ and let $\{b_{1},b_{2},b_{3}\} \subset \{\infty,0,1,\lambda_{1},\ldots,\lambda_{n-2}\}$ be the complement of the projection of the fixed points of $\{a_{i_{1}},\ldots,a_{i_{n-2}}\}$, then  
$K_{P}=\langle a_{i_{1}}a_{i_{2}},a_{i_{1}}a_{i_{3}},\ldots,a_{i_{1}}a_{i_{n-2}}\rangle$.

\begin{rema}
Note, in the above, that different tuples determine the same $K$ if and only if the corresponding $I_{1}$ coincide.
\end{rema}

Let 
$T(z)=(z-b_{2})(b_{3}-b_{1})/(z-b_{1})(b_{3}-b_{2})$, $U(z)=((1+z^{2})/2z)^{2}$ and $Q(z)=U \circ T^{-1}(z)$. Then, $Q:\widehat{\mathbb C} \to \widehat{\mathbb C}$ is a regular branched cover with deck group $J=\langle z \mapsto -z, z \mapsto 1/z\rangle \cong {\mathbb Z}_{2}^{2}$ whose branch values are $b_{1}$, $b_{2}$ and $b_{3}$. Now, let us consider the $4n-8$ preimages under $Q$ of the points in $\{\mu_{1},...,\mu_{n-2}\}=\{\infty,0,1,\lambda_{1},...,\lambda_{n-2}\}-\{b_{1},b_{2},b_{3}\}$. The set of these lifted points by $Q$ is a disjoint union of $n-2$ sets of cardinality $4$ each one (each one is a complete orbit under $J$). The equation of the hyperelliptic curve $S/K_{P}$ is
$$S/K_{P}: \quad y^{2}=\prod_{j=1}^{n-2} (x^{4}+2(1-2\mu_{j})x^{2}+1).$$

\section{Example: $n=4$ (classical Humbert curves)} \label{Ejemplo}
Let $(S=C(\lambda_{1},\lambda_{2}),H=H_{0})$, where $(\lambda_{1},\lambda_{2}) \in V_{4}$, be a generalized Humbert pair of type $n=4$, and let
$\{a_{1}, a_{2}, a_{3}, a_{4}, a_{5}\}$ be the set of standard generators of $H$. In this case, $S$ has genus $g=5$ and 
the subgroups of $H$, acting freely and providing hyperelliptic quotients, are isomorphic to either ${\mathbb Z}_{2}$ or ${\mathbb Z}_{2}^{2}$.

\subsection{}
The $10$ subgroups of $H$, isomorphic to ${\mathbb Z}_{2}$ and acting freely on $S$, are given by
$$\begin{array}{lllll}
L_{1}=\langle a_{1}a_{2}\rangle,& L_{2}=\langle a_{1}a_{3}\rangle,& L_{3}=\langle a_{1}a_{4}\rangle,& L_{4}=\langle a_{1}a_{5}\rangle,& L_{5}=\langle a_{2}a_{3}\rangle,\\
L_{6}=\langle a_{2}a_{4}\rangle, & L_{7}=\langle a_{2}a_{5}\rangle, & L_{8}=\langle a_{3}a_{4}\rangle, & L_{9}=\langle a_{3}a_{5}\rangle, & L_{10}=\langle a_{4}a_{5}\rangle.
\end{array}
$$

The $10$ hyperelliptic curves of genus three, provided by these $10$ subgroups, are given by
$$y^{2}=(x^{4}+2(1-2a)x^{2}+1)(x^{4}+2(1-2b)x^{2}+1),$$
where $(a,b)$ runs over the following pairs
$$\begin{array}{l}
(\lambda_{1},\lambda_{2}),(1-\lambda_{1},\lambda_{2}(1-\lambda_{1})/(\lambda_{2}-\lambda_{1})),(\lambda_{1}/(\lambda_{1}-1),(\lambda_{2}-\lambda_{1})/(1-\lambda_{1})), \\
(1/\lambda_{1},\lambda_{2}/\lambda_{1}),  (1-\lambda_{2},\lambda_{1}(1-\lambda_{2})/(\lambda_{1}-\lambda_{2}),(\lambda_{2}/(\lambda_{2}-1),(\lambda_{1}-\lambda_{2})/(1-\lambda_{2})),\\
(1/\lambda_{2},\lambda_{1}/\lambda_{2}),
((1-\lambda_{1})/(1-\lambda_{2}),\lambda_{2}(1-\lambda_{1})/(\lambda_{1}(1-\lambda_{2}))),
(\lambda_{2}/\lambda_{1},(1-\lambda_{2})/(1-\lambda_{1})), \\ (\lambda_{1}/\lambda_{2},\lambda_{1}(1-\lambda_{2})/(\lambda_{2}(1-\lambda_{1}))).
\end{array}
$$

\subsection{}
The $10$ subgroups of $H$, isomorphic to ${\mathbb Z}_{2}^{2}$ and acting freely on $S$, are given by
$$\begin{array}{c}
K_{1}=\langle a_{1}a_{2},a_{1}a_{3}\rangle,\; K_{2}=\langle a_{1}a_{2},a_{1}a_{4}\rangle,\; K_{3}=\langle a_{1}a_{2},a_{1}a_{5}\rangle,\; K_{4}=\langle a_{1}a_{3},a_{1}a_{4}\rangle,\\
K_{5}=\langle a_{1}a_{3},a_{1}a_{5}\rangle,\;K_{6}=\langle a_{1}a_{4},a_{1}a_{5}\rangle, \; K_{7}=\langle a_{2}a_{3},a_{2}a_{4}\rangle,\\
K_{8}=\langle a_{2}a_{3},a_{2}a_{5}\rangle, \; K_{9}=\langle a_{2}a_{4},a_{2}a_{5}\rangle, \; K_{10}=\langle a_{3}a_{4},a_{3}a_{5}\rangle.
\end{array}
$$

In order to get algebraic curves descriptions, for the above corresponding $10$ Riemann surfaces of genus two, we proceed as follows. We consider the $10$ choices for $\{b_{1},b_{2}\}$: 
(i) $\{\infty,0\}$, (ii) $\{\infty,1\}$, (iii) $\{\infty,\lambda_{1}\}$, (iv) $\{\infty,\lambda_{2}\}$, (v) $\{0,1\}$, (vi) $\{0,\lambda_{1}\}$, (vii) $\{0,\lambda_{2}\}$, (viii) $\{1,\lambda_{1}\}$, (ix) $\{1,\lambda_{2}\}$, (x) $\{\lambda_{1},\lambda_{2}\}$. The choices for $Q(z)$ we may use in each case are: (i) $Q(z)=z^{2}$, (ii) $Q(z)=z^{2}+1$, (iii) $Q(z)=z^{2}+\lambda_{1}$,    (iv) $Q(z)=z^{2}+\lambda_{2}$,  (v) $Q(z)=1/(z^{2}+1)$, (vi) $Q(z)=\lambda_{1}/(z^{2}+1)$, (vii) $Q(z)=\lambda_{2}/(z^{2}+1)$,(viii) $Q(z)=(z^{2}+\lambda_{1})/(z^{2}+1)$,  (ix) $Q(z)=(z^{2}+\lambda_{2})/(z^{2}+1)$, (x) $Q(z)=(\lambda_{1}z^{2}+\lambda_{2})/(z^{2}+1)$. In this way, we obtain the $10$ desired hyperelliptic Riemann surfaces (in the first one, $C_{1}$, we have also changed $(x,y)$ by $(ix,iy)$):
$$
{\small
\begin{array}{l}
C_{1}: \; y^{2}=(x^{2}+1)(x^{2}+\lambda_{1})(x^{2}+\lambda_{2}),\;
C_{2}: \; y^{2}=(x^{2}+1)(x^{2}+1-\lambda_{1})(x^{2}+1-\lambda_{2}),\\
C_{3}: \; y^{2}=(x^{2}+\lambda_{1})\left(x^{2}-1+\lambda_{1}\right)\left(x^{2}-\lambda_{2}+\lambda_{1}\right),\;
C_{4}: \; y^{2}=(x^{2}+\lambda_{2})\left(x^{2}-1+\lambda_{2}\right)\left(x^{2}-\lambda_{1}+\lambda_{2}\right),\\
C_{5}: \; y^{2}=(x^{2}+1)\left(x^{2}+(\lambda_{1}-1)/\lambda_{1}\right)\left(x^{2}+(\lambda_{2}-1)/\lambda_{1}\right),\\
C_{6}: \; y^{2}=(x^{2}+1)\left(x^{2}+1-\lambda_{1}\right)\left(x^{2}+(\lambda_{2}-\lambda_{1})/\lambda_{2}\right),\\
C_{7}: \; y^{2}=(x^{2}+1)\left(x^{2}+1-\lambda_{2}\right)\left(x^{2}+(\lambda_{1}-\lambda_{2})/\lambda_{1}\right),\\
C_{8}: \; y^{2}=(x^{2}+1)(x^{2}+\lambda_{1})\left(x^{2}+(\lambda_{2}-\lambda_{1})/(1-\lambda_{2})\right),\\
C_{9}: \; y^{2}=(x^{2}+1)(x^{2}+\lambda_{2})\left(x^{2}+(\lambda_{1}-\lambda_{2})/(1-\lambda_{1})\right),\\
C_{10}: \; y^{2}=(x^{2}+1)\left(x^{2}+\lambda_{2}/\lambda_{1}\right)\left(x^{2}+(\lambda_{2}-1)/(\lambda_{1}-1)\right).
\end{array}
}
$$

Note that if we change $(x,y)$ by $\left(\sqrt{\lambda_{1}}x,\sqrt{\lambda_{1}^{3}}y\right)$, then $C_{3}$ is transformed into the curve
$$C'_{3}: \; y^{2}=(x^{2}+1)\left(x^{2}+(\lambda_{1}-1)/\lambda_{1}\right)\left(x^{2}+(\lambda_{1}-\lambda_{2})/\lambda_{1}\right)$$
and if we change $(x,y)$ by $\left(\sqrt{\lambda_{2}}x,\sqrt{\lambda_{2}^{3}}y\right)$, then $C_{4}$ is transformed into the curve
$$C'_{4}: \; y^{2}=(x^{2}+1)\left(x^{2}+(\lambda_{2}-1)/\lambda_{2}\right)\left(x^{2}+(\lambda_{2}-\lambda_{1})/\lambda_{2}\right)$$

\subsection{}
Each subgroup $K_{j}$ contains exactly $3$ of the subgroups $L_{k}$'s; for instance, $K_{1}$ contains $L_{1}$, $L_{2}$ and $L_{5}$.
As noted before, the genus two surface $S/K_{j}$ is obtained by considering two points $b_{1},b_{2} \in \{\infty,0,1,\lambda_{1},\lambda_{2}\}$. A Riemann surface $S/L_{k}$ over $S/H_{j}$ is obtained by considering a point $b_{3} \in \{\infty,0,1,\lambda_{1},\lambda_{2}\}-\{b_{1},b_{2}\}$. In this way, once we have chosen $b_{1}$ and $b_{2}$, there are exactly $3$ possible choices for $b_{3}$; these are the three subgroups $L_{k}$'s contained inside $K_{j}$.
For example, if we take $\{b_{1},b_{2}\}=\{\lambda_{1},\lambda_{2}\}$, 
then the genus two surface (uniformized by one of the $K_{j}$'s) is given by 
$$y^{2}=(x^{2}+1)\left(x^{2}+\lambda_{2}/\lambda_{1}\right)\left(x^{2}+(\lambda_{2}-1)/(\lambda_{1}-1)\right),$$
and the three genus three surfaces (uniformized by one of the $L_{k}$'s contained in the corresponding $K_{j}$) are 
$$
\begin{array}{ll}
y^{2}=(x^{4}+1)\left(x^{4}+\lambda_{2}/\lambda_{1}\right), &\mbox{if} \; b_{3}=1.\\
y^{2}=(x^{4}+\lambda_{2}/\lambda_{1})\left(x^{4}+(\lambda_{2}-1)/(\lambda_{1}-1)\right), & \mbox{if} \; b_{3}=\infty.\\
y^{2}=(x^{4}+1)\left(x^{4}+(\lambda_{2}-1)/(\lambda_{1}-1)\right), & \mbox{if} \; b_{3}=0.
\end{array}
$$

\section{A connection to some parametrizing spaces}\label{prueba}
\subsection{Some moduli spaces}
Some general facts on the complex analytical theory of the (coarse) moduli spaces of Riemann orbifolds can be found, for instance, in \cite{Nag, Royden}. We denote by
${\mathcal M}_{g}$ the moduli space of closed Riemann surfaces of genus $g \geq 1$. This is a complex orbifold of dimension (i) $3(g-1)$ for $g \geq 2$ and (ii) $1$ for $g=1$. The moduli space  of hyperelliptic Riemann surfaces of genus $g \geq 2$, which we denote by ${\mathcal M}_{g}^{hyp}$, is a complex orbifold of dimension $2g-1$. The uniqueness of the hyperelliptic involution asserts that there is a natural holomorphic embedding of ${\mathcal M}_{g}^{hyp}$ into ${\mathcal M}_{g}$. 

If $g \geq 2$ is even, then ${\mathcal M}_{(g,2)}$ denotes the sublocus of ${\mathcal M}_{g}$ consisting 
of those classes of Riemann surfaces admitting a conformal involution with exactly two fixed points. This is a complex suborbifold of dimension $(3g-2)/2$. 
In this case, each connected component of ${\mathcal M}^{hyp}_{(g,2)}:={\mathcal M}_{(g,2)} \cap {\mathcal M}_{g}^{hyp}$ can be identified with the moduli space ${\mathcal M}_{0,g+3}$ of $(g+3)$-marked spheres, which has dimension $g$.

Similarly, for $g \geq 1$ odd, the sublocus ${\mathcal M}_{(g,4)}$ of ${\mathcal M}_{g}$ consisting 
of those classes of Riemann surfaces admitting a conformal involution with exactly four fixed points is a suborbifold of dimension $(3g-1)/2$.
In this case, each connected component of ${\mathcal M}^{hyp}_{(g,4)}:={\mathcal M}_{(g,4)} \cap {\mathcal M}_{g}^{hyp}$ can be identified with the moduli space of $(g+3)$-marked sphere, which has dimension $g$.

Let ${\mathcal M}_{(g;2,2)}$, $g \geq 1$, be the moduli space of Riemann orbifolds of genus $g$ with exactly two cone points of order two. This is a complex orbifold of dimension $3g-1$. For $g \geq 2$, we let ${\mathcal M}^{hyp}_{(g;2,2)}$ its suborbifold consisting of the conformal classes of those Riemann orbifolds whose underlying Riemann surface is hyperelliptic and whose hyperelliptic involution permutes the two cone points (it does not fixes them). This space has dimension $2g$.

Finally, (see Section \ref{Sec:moduli}) ${\mathcal H}_{n}$ denotes the moduli space of generalized Humbert curves of type $n \geq 4$. As the generalized Humbert group of type $n$ is unique, then there is a natural holomorphic embedding of ${\mathcal H}_{n}$ into ${\mathcal M}_{g_{n}}$. Moreover, this moduli space is isomorphic to the 
moduli space ${\mathcal M}_{0,n+1}$ of $(n+1)$-marked spheres.

\subsection{A relation between the above parametrizing spaces}

\begin{theo}\label{main}
\mbox{}
\begin{itemize}
\item[(1)] If $n \geq 4$ is an even integer, then 
\begin{itemize}
\item[(1.1)] there is a generically injective holomorphic map
${\mathcal M}_{0,n+1} \to \left({\mathcal M}^{hyp}_{(n-2,2)}\right)^{n(n+1)/2}$. 
\item[(1.2)] there is a degree $n(n+1)/2$ holomorphic surjective map
${\mathcal M}^{hyp}_{(n-2,2)} \to {\mathcal M}_{0,n+1}$.
\item[(1.3)] there is a generically injective holomorphic map
${\mathcal M}_{0,n+1} \to \left({\mathcal M}^{hyp}_{((n-2)/2;2,2)}\right)^{n+1}$.
\item[(1.4)] there is a degree $(n+1)$ holomorphic surjective map
${\mathcal M}^{hyp}_{((n-2)/2;2,2)} \to {\mathcal M}_{0,n+1}$.
\end{itemize}
\item[(2)] If $n \geq 5$ is an odd integer, then
\begin{itemize} 
\item[(2.1)] there is a generically injective holomorphic map
${\mathcal M}_{0,n+1} \to \left({\mathcal M}^{hyp}_{(n-2,4)}\right)^{n(n+1)/2}$.
\item[(2.2)] there is a degree $n(n+1)/2$ holomorphic surjective map
${\mathcal M}^{hyp}_{(n-2,4)}\to {\mathcal M}_{0,n+1}$.

\end{itemize}
\end{itemize}
\end{theo}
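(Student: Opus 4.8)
The plan is to realise each map through the correspondence, set up in the algebraic-descriptions subsection, between a generalised Humbert orbifold $\mathcal{O}=S/H$ and its hyperelliptic quotients $S/K$ from Theorem~\ref{teo4}. I identify $\mathcal{M}_{0,n+1}$ with $\mathcal{H}_n$ via the Corollary, so a point is a sphere carrying the $n+1$ cone points $\{\infty,0,1,\lambda_1,\dots,\lambda_{n-2}\}$. Each labelled pair of maps goes in opposite directions: the \emph{downward} maps $(1.2),(1.4),(2.2)$ recover $\mathcal{O}$ from a single hyperelliptic quotient by dividing out the remaining automorphisms, while the \emph{upward} maps $(1.1),(1.3),(2.1)$ send $\mathcal{O}$ to the tuple of all its quotients of the prescribed kind.

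For the downward maps I would first verify that the target is genuinely a Humbert orbifold. Let $R$ be hyperelliptic of genus $n-2$, with hyperelliptic involution $\iota$ (hence $2n-2$ fixed points), and let $\tau$ be a second involution; as $\iota$ is central, $\langle\iota,\tau\rangle\cong{\mathbb Z}_2^2$ and $\iota\tau$ is again an involution. Writing $f_\iota,f_\tau,f_{\iota\tau}$ for the fixed-point counts and applying $\chi(R)=4\,\chi^{\mathrm{orb}}(R/\langle\iota,\tau\rangle)$, a short computation shows that $f_\tau=2$ forces $R/\langle\iota,\tau\rangle$ to have genus $0$ with $f_{\iota\tau}=2$, and $f_\tau=4$ forces genus $0$ with $f_{\iota\tau}=0$; in both cases the quotient is the sphere with exactly $n+1$ order-two cone points, a Humbert orbifold of type $n$ (this is precisely what pins down the signatures in Theorem~\ref{teo4}). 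Thus $(1.2)$ and $(2.2)$ are $[R]\mapsto[R/\langle\iota,\tau\rangle]$, while $(1.4)$ is $[\mathcal{O}']\mapsto[\mathcal{O}'/\langle\bar\iota\rangle]$, where $\bar\iota$ is the hyperelliptic involution of the genus-$(n-2)/2$ surface underlying $\mathcal{O}'$; since $\bar\iota$ swaps the two cone points, that quotient is again the sphere with $n+1$ order-two points. These maps are holomorphic because the quotients are given by the explicit algebraic families of the previous subsection, and surjective because Theorem~\ref{teo4} realises every $\mathcal{O}$. The degree is the number of quotients over a generic $\mathcal{O}$: the $n(n+1)/2$ subgroups $K$ for $(1.2)$ and $(2.2)$, and the $n+1$ orbifolds $\mathcal{O}'$ indexed by the choice of distinguished cone point (the double cover of $\mathcal{O}$ branched over the remaining $n$ points) for $(1.4)$.

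For the upward maps I would send $\mathcal{O}$ to the tuple whose entries are these quotients over all admissible choices, holomorphic for the same reason. Generic injectivity then follows at once from the downward maps: applying $(1.2)$, $(1.4)$ or $(2.2)$ to a single entry returns $\mathcal{O}$, so the tuple determines $\mathcal{O}$ on the open locus where everything is defined; equivalently, over a generic $\mathcal{O}$ the fibre of the downward map is exactly the set of entries of the tuple. To organise the factors coherently I would pass to $V_n$, where the $\binom{n+1}{2}$ two-element subsets (resp. the $n+1$ points) are canonically labelled and the construction is $\mathfrak{S}_{n+1}$-equivariant, so the map descends to $\mathcal{M}_{0,n+1}=V_n/\mathfrak{S}_{n+1}$.

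The hard part is the degree count, which needs the $n(n+1)/2$ surfaces $S/K$ (resp. the $n+1$ orbifolds) lying over a generic $\mathcal{O}$ to be pairwise non-isomorphic, so that the fibre has full cardinality. Here genericity is essential: by Theorem~\ref{teo5} and the ensuing Remark, for generic $\mathcal{O}$ one has $\mathrm{Aut}(S)=H$, whence distinct $K$ give non-conjugate, hence non-isomorphic, marked quotients; what remains is to exclude accidental isomorphisms of the underlying surfaces, which can only occur on a proper analytic subset (for $n=4$ this is visible in the distinct branch data of $C_1,\dots,C_{10}$). Two subsidiary points to discharge are that the loci where the fixed-point data degenerate — a fixed point of $\tau$ meeting a Weierstrass point, producing a point with full ${\mathbb Z}_2^2$-stabiliser — are proper subvarieties irrelevant to the degree, and that the orbifold Euler-characteristic computation genuinely determines the signature in each case.
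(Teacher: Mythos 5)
Your overall architecture is the same as the paper's: upward maps given by tuples of quotients, downward maps given by quotient maps whose degrees are the subgroup counts of Theorems \ref{teo4} and \ref{teo9}, genericity via Theorem \ref{teo5} and the remark that generically ${\rm Aut}(S)=H$; your Euler-characteristic computation is exactly the opening step of the paper's proof of Lemma \ref{embed2}, and your indexing of (1.3)/(1.4) by a distinguished cone point is the content of Lemma \ref{lemma2} and Theorem \ref{teo9}. The genuine gap is that you never prove, nor even isolate, the statement the paper records as Part (2) of Lemma \ref{embed2}: for $R$ hyperelliptic of genus $n-2$ with hyperelliptic involution $\iota$, any two conformal involutions $u,v\neq \iota$ satisfy $\langle u,\iota\rangle=\langle v,\iota\rangle$, i.e.\ $R$ carries a \emph{unique} Klein four-group containing $\iota$. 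Your argument needs this twice. First, your downward map $(1.2)$, $[R]\mapsto[R/\langle\iota,\tau\rangle]$, is defined on isomorphism classes of \emph{surfaces}, not of pairs $(R,\tau)$; without uniqueness of $\langle\iota,\tau\rangle$ the map may be multivalued, and then your injectivity argument (``applying $(1.2)$ to a single entry returns $\mathcal{O}$'') never gets started. Second, your degree count must pass from ``the pairs $(S/K_i,H/K_i)$ are generically non-isomorphic'' (which Theorem \ref{teo5} plus the Remark does give) to ``the surfaces $S/K_i$ are generically non-isomorphic''; an abstract isomorphism of surfaces need not carry $H/K_i$ to $H/K_j$, and it is precisely the uniqueness of the Klein four-group that forces it to, turning a surface isomorphism into an isomorphism of pairs and hence a contradiction.

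You acknowledge the second point but dispose of it by asserting that accidental isomorphisms ``can only occur on a proper analytic subset.'' That is essentially the statement to be proved: the locus where $S/K_i\cong S/K_j$ is indeed analytic, but to see it is proper you would have to exhibit, for every pair $i\neq j$ and every even $n$, at least one orbifold where the two surfaces differ, and your pointer to the branch data of $C_1,\dots,C_{10}$ addresses (without verification) only $n=4$. The paper needs no such genericity argument at this step: Lemma \ref{embed2}(2) is proved group-theoretically, by descending $u,v$ to commuting M\"obius transformations normalized to $1/z$ and $-z$, and showing that $\langle u,\iota\rangle\neq\langle v,\iota\rangle$ forces the $2n-2$ branch points into quadruples $\{\pm a_k,\pm a_k^{-1}\}$, hence $n$ odd --- a contradiction valid at \emph{every} point of moduli when $n$ is even, which is what makes the downward map globally defined of degree $n(n+1)/2$ and yields the model $V_{n}/({\mathfrak S}_{n-1}\oplus{\mathbb Z}_{2})$. (Your treatment of (1.3)/(1.4) is fine as it stands, since there the canonical group is generated by the hyperelliptic involution of the underlying surface, which is automatically unique; it is (1.1)/(1.2), and correspondingly (2.1)/(2.2) where an analogue for the involution with four fixed points is required, that need this lemma or a substitute.)
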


\subsection{Proof of part (1) of Theorem \ref{main}}
We assume $(S,H)$ is a generalized Humbert pair of type $n \geq 4$ even and let $K_{1}$,..., $K_{n(n+1)/2}$ be those subgroups of $H$ isomorphic to ${\mathbb Z}_{2}^{n-2}$ and acting freely on $S$. Denote, as before, by $a_{1},...,a_{n+1}$ the standard generators of $H$. We already know  that $S/K_{i}$ is a hyperelliptic Riemann surface of genus $n-2$, that $H/K_{i} <Aut(S/K_{i})$ is generated by the hyperelliptic involution $j_{i}$ and a conformal involution $\tau_{i}$ with exactly two fixed points ($j_{i} \tau_{i}$ also has exactly two fixed points).  Part (1) of the following lemma asserts that, up to isomorphisms, in the above we obtain all possible pairs $(R,G)$, where $R$ runs over the hyperelliptic Riemann surfaces of genus $n-2$ and ${\mathbb Z}_{2}^{2} \cong G<Aut(R)$ contains  the hyperelliptic involution of $R$.

\begin{lemm}\label{embed2}
Let $R$ be a hyperelliptic Riemann surface of genus $n-2$, where $n \geq 4$ is even, whose hyperelliptic involution is $j$.
\begin{itemize}
\item[(1)] If $G<Aut(R)$ is so that $G \cong {\mathbb Z}_{2}^{2}$ contains $j$, then there is a generalized Humbert pair $(S,H)$ and a subgroup
${\mathbb Z}_{2}^{n-2} \cong K<H$ acting freely on $S$ so that 
$(R,G)$ is conformally equivalent to $(S/K,H/K)$.

\item[(2)]  If $u,v \in Aut(R)$ are conformal involutions, both of them different from $j$, then $\langle u,j\rangle=\langle v,j\rangle$.
\end{itemize}
\end{lemm}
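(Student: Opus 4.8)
The plan is to prove the two parts of Lemma~\ref{embed2} by reducing everything to the quotient orbifold and exploiting the rigidity of generalized Humbert curves established earlier in the excerpt.

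For part (2), I would first recall that on a hyperelliptic Riemann surface the hyperelliptic involution $j$ is central in $\mathrm{Aut}(R)$ (it is unique and hence normal, and being an involution commuting with all automorphisms it is central). Given two involutions $u,v \neq j$, I would consider the quotients $R/\langle u,j\rangle$ and $R/\langle v,j\rangle$. Since $j$ is central, $\langle u,j\rangle$ and $\langle v,j\rangle$ are each isomorphic to ${\mathbb Z}_2^2$ (provided $u,v \ne 1$, which is given), so each acts with quotient a genus-zero orbifold by the reasoning of Theorem~\ref{cocientes}. The key is that $R/\langle u,j\rangle = (R/\langle j\rangle)/\langle \overline{u}\rangle$, where $\overline{u}$ is the induced M\"obius involution on $R/\langle j\rangle \cong \widehat{\mathbb C}$. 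The central point is that a hyperelliptic involution's quotient $\widehat{\mathbb C}$ carries the $2g+2$ Weierstrass points, and any automorphism of $R$ commuting with $j$ descends to a M\"obius transformation permuting these $2g+2$ points; I would then argue that the only way two such involutions $u,v$ can both be nontrivial mod $\langle j\rangle$ yet generate the \emph{same} Klein four-group is forced by counting, i.e. I must show $\overline{u}=\overline{v}$ as M\"obius maps, equivalently $v \in \{u, uj\} = \langle u,j\rangle \setminus \langle j\rangle$. The crux is ruling out a configuration where $u,v$ induce two \emph{distinct} M\"obius involutions; this is exactly where the hyperelliptic hypothesis and the specific genus $n-2$ should enter, presumably via the structure of Theorem~\ref{teo4}(i).

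For part (1), I would start from the pair $(R,G)$ with $G \cong {\mathbb Z}_2^2$ containing $j$. The quotient orbifold $R/G = (R/\langle j\rangle)/(G/\langle j\rangle)$ is $\widehat{\mathbb C}$ acted on by the M\"obius involution $G/\langle j\rangle \cong {\mathbb Z}_2$, so $R/G$ is a genus-zero orbifold. I would compute, via Riemann--Hurwitz applied to $R \to R/G$, that $R/G$ has exactly $n+1$ cone points of order $2$; this identifies $R/G$ as a generalized Humbert orbifold ${\mathcal O}$ of type $n$. The decisive tool is then the property quoted in the introduction (from \cite{CGHR,GHL}): since $R$ admits ${\mathbb Z}_2^2 \cong G$ with quotient the Humbert orbifold ${\mathcal O}$, there is the associated generalized Humbert curve $S$ with group $H \cong {\mathbb Z}_2^n$ and a subgroup $K<H$ acting freely with $R = S/K$ and $G = H/K$. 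Matching $G \cong {\mathbb Z}_2^2$ forces $K \cong {\mathbb Z}_2^{n-2}$, and freeness is automatic from the construction. Thus $(R,G)$ is conformally equivalent to $(S/K, H/K)$ as required.

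The main obstacle I anticipate is part (2): establishing that every nontrivial involution mod $\langle j\rangle$ induces \emph{the same} M\"obius involution on the Weierstrass-point configuration. The subtlety is that a priori $R$ could carry several independent commuting involutions, and I must use that the genus is exactly $n-2$ together with the combinatorial description in Theorem~\ref{teo4}(i) — where $H/K$ is shown to contain, besides $j$, precisely two involutions each with exactly two fixed points — to pin down that no third distinct M\"obius involution is available. I expect the cleanest route is to count, via Riemann--Hurwitz, the possible fixed-point numbers of involutions on a genus $n-2$ hyperelliptic surface and show that the branch data force $\overline{u}$ and $\overline{v}$ to coincide; the careful bookkeeping of how M\"obius involutions permute the $2(n-1)$ Weierstrass points is the step most likely to require real work.
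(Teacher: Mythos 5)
Your part (1) is essentially the paper's own argument and is fine: Riemann--Hurwitz identifies $R/G$ as a generalized Humbert orbifold of type $n$, and one takes $(S,H)$ to be its homology cover with $K$ the subgroup uniformizing $R$; matching $H/K\cong G\cong{\mathbb Z}_{2}^{2}$ gives $K\cong{\mathbb Z}_{2}^{n-2}$.

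Part (2), however, has a genuine gap. You correctly reduce the problem to showing that the induced M\"obius involutions $\widehat{u}$ and $\widehat{v}$ on $R/\langle j\rangle\cong\widehat{\mathbb C}$ coincide, but you never carry out that step: you defer it as ``the step most likely to require real work,'' and the route you propose (via Theorem \ref{teo4}(i)) does not apply as stated, since Theorem \ref{teo4} describes the group $H/K$ for a specific freely acting $K<H$, not the full automorphism group of an abstract hyperelliptic surface $R$. The missing argument is the entire content of part (2), and in the paper it goes as follows. Suppose $\widehat{u}\neq\widehat{v}$, so $\langle\widehat{u},\widehat{v}\rangle\cong{\mathbb Z}_{2}^{2}$; normalize $\widehat{u}(z)=1/z$, $\widehat{v}(z)=-z$. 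Since $j\notin\{u,v,uv\}$ and point stabilizers in ${\rm Aut}(R)$ are cyclic, none of $u$, $v$, $uv$ shares a fixed point with $j$, hence none of $\widehat{u}$, $\widehat{v}$, $\widehat{u}\widehat{v}$ fixes any of the $2(n-1)$ branch values of $\pi:R\to\widehat{\mathbb C}$. The branch locus is then a union of \emph{free} orbits of the Klein group, each of size $4$, so $4$ divides $2(n-1)$, forcing $n$ to be odd --- a contradiction with the standing assumption, made at the start of that subsection, that $n$ is even. This parity input is exactly what your sketch never isolates, and it cannot be avoided: for $n$ odd the assertion of part (2) is simply false in general (take $y^{2}=\prod_{k}\left(x^{2}-a_{k}^{2}\right)\left(x^{2}-a_{k}^{-2}\right)$, which carries involutions $(x,y)\mapsto(-x,y)$ and $(x,y)\mapsto(1/x,y/x^{2m})$ generating distinct Klein groups with $j$, consistently with Theorem \ref{teo4}(iii)). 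So no bookkeeping of Weierstrass points can close your argument without explicitly invoking the evenness of $n$, and the counting you postpone is precisely where the proof lives.
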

\begin{proof}
(1) As $G$ contains the hyperelliptic involution, by the Riemann Hurwitz formula, the quotient $R/G$ has genus zero and exactly $n+1$ cone points, each one of order two. Now, just take $(S,H)$ as a generalized Humbert pair such that $S/H=R/G$ and use the fact that $S$ is the highest abelian regular branched cover of the orbifold $S/H$.

(2). Let us consider a $2$-fold branched cover $\pi:R \to \widehat{\mathbb C}$ (its deck group is generated by the hyperelliptic cinvolution). Then, both $u$ and $v$ descends by $\pi$ to commuting conformal involutions, say $\widehat{u}$ and $\widehat{v}$, respectively. If $\widehat{u}=\widehat{v}$, then we are done. Let us assume we have $\widehat{u} \neq \widehat{v}$, that is, $\langle  \widehat{u}, \widehat{v} \rangle \cong {\mathbb Z}_{2}^{2}$. Up to a Moebius transformation, we may assume
$\widehat{u}(z)=1/z$ and $\widehat{v}(z)=-z$. As we are assuming that $j \notin \{u,v,uv\}$, none of $u$, $v$ or $uv$ may have a common fixed point with $j$ (this because the stabilizer of any point in $Aut(R)$ is cyclic). It follows that none of $\widehat{u}$, $\widehat{v}$ or $\widehat{u}\widehat{v}$ fixes a branch value of $\pi$ and, in particular, that $R$ must have a curve representation as follows
$$y^{2}=\prod_{j=1}^{(n-1)/2} \left( x^{2}-a_{j}^{2} \right) \left( x^{2}-a_{j}^{-2}\right)$$
and $n$ is odd, a contradiction to the fact that $n$ was assumed to be even.
\end{proof}

\subsubsection{Proof of Parts (1.1) and (1.2)}
As the generic orbifold $S/H$ has trivial group of orbifold automorphisms, Lemma \ref{teo5} asserts that the $n(n+1)/2$ pairs
$$(S/K_{1},H/K_{1}),..., (S/K_{n(n+1)/2},H/K_{n(n+1)/2})$$
are generically pairwise conformally non-equivalent. Now, part (2) of Lemma \ref{embed2} asserts that the hyperelliptic Riemann surfaces $S/K_{1},..., S/K_{n(n+1)/2}$ are generically pairwise conformally non-equivalent, in particular, 
$${\mathcal M}_{0,n+1} \to \left({\mathcal M}^{hyp}_{(n-2,2)}\right)^{n(n+1)/2}:
[(S,H)] \mapsto ([S/K_{1},H/K_{1})],..., [(S/K_{n(n+1)/2},H/K_{n(n+1)/2})])$$
is a generically injective holomorphic map. This provides Part (1.1) of Theorem \ref{main}. 

Part (1.2) of Theorem \ref{main} will be just a consequence of Part (1.1) and Part (1) of Lemma \ref{embed2}.
We proceed to describe the desired surjective holomorphic map in terms of $V_{n}$. Assume we are given a hyperelliptic Riemann surface $R$ of genus $(n-2)$, whose hyperelliptic involution is $j$, and $G=\langle j,\tau\rangle \cong {\mathbb Z}^{2}$, a group of conformal automorphism of $R$, so that $\tau$ has exactly two fixed points ($j \tau$ also has exactly two fixed points) and $R/G$ is an orbifold of genus zero and exactly $n+1$ cone points, each one of order two. We may assume $R/G$ is the Riemann sphere and the conical points to be $\infty$, $0$, $1$, $\lambda_{1}$,..., $\lambda_{n-2}$, so that (i) $\lambda_{n-3}$ is the projection of both fixed points of $\tau$ and (ii) $\lambda_{n-2}$ is the projection of both fixed point of $j \tau$. This choice is not unique as we may compose at the left by a M\"obius transformation that sends any of three points in $\{\infty,0,1,\lambda_{1},...,\lambda_{n-4}\}$ to $\infty$, $0$ and $1$. This  corresponds to the action on $V_{n}$ by the subgroup ${\mathfrak S}_{n-1}=\langle s,b\rangle < {\mathfrak S}_{n+1},$
where
$$s(\lambda_{1},...,\lambda_{n-2})= \left( \frac{\lambda_{n-4}}{\lambda_{n-4}-1},\frac{\lambda_{n-4}}{\lambda_{n-4}-\lambda_{1}},..., \frac{\lambda_{n-4}}{\lambda_{n-4}-\lambda_{n-5}},
\frac{\lambda_{n-4}}{\lambda_{n-4}-\lambda_{n-3}},\frac{\lambda_{n-4}}{\lambda_{n-4}-\lambda_{n-2}} \right).$$

Next, as we may permut the involutions $\tau$ and $j\tau$, we also need to consider the action of the involution
$$c(\lambda_{1},...,\lambda_{n-2})=(\lambda_{1},...,\lambda_{n-4},\lambda_{n-2},\lambda_{n-3}) \in {\mathfrak S}_{n+1}.$$

Note that $cs=sc$ and $cb=bc$, so $\langle {\mathfrak S}_{n-1},c\rangle= {\mathfrak S}_{n-1} \oplus {\mathbb Z}_{2}$. A model of the space ${\mathcal M}^{hyp}_{(n-2,2)}$ is, by the above and Lemma \ref{embed2}, given by 
$V_{n}/({\mathfrak S}_{n-1} \oplus {\mathbb Z}_{2}).$

Also, a model of the moduli space of pairs $(R,\tau)$, where $R$ is a hyperelliptic  Riemann surface of genus $n-2$ and $\tau:R \to R$ is a conformal involution with exactly two fixed points, is given by $V_{n}/{\mathfrak S}_{n-1}$. In these models, the surjective holomorphic map in Part (1.2) of Theorem \ref{main} corresponds to the canonical projection map 
$$V_{n}/({\mathfrak S}_{n-1} \oplus {\mathbb Z}_{2}) \to V_{n}/{\mathfrak S}_{n+1}$$
in the following diagram

$$
\xymatrixcolsep{4pc}
\xymatrix{ 
V_{n} \ar[r]^{{\mathfrak S}_{n-1}} & V_{n}/{\mathfrak S}_{n-1} \ar[r]^{{\mathfrak S}_{n-1} \oplus {\mathbb Z}_{2}} \ar[rd]^{n(n+1)} &V_{n}/({\mathfrak S}_{n-1} \oplus {\mathbb Z}_{2}) \ar[d]^{\frac{n(n+1)}{2}} \\ 
& &{\mathcal M}_{0,n+1}}
$$

\begin{exem}[$n=4$]
 If $(\lambda_{1},\lambda_{2}) \in V_{4}$ are so that $S/H$ is conformally equivalent to the orbifold provided by $\widehat{\mathbb C}$ with conical points $\infty$, $0$, $1$, $\lambda_{1}$ and $\lambda_{2}$. Choose the conical points $\lambda_{1}$ y $\lambda_{2}$ and set $P(z)=(\lambda_{1}z^{2}+\lambda_{2})/(z^{2}+1)$. Then $P:\widehat{\mathbb C} \to \widehat{\mathbb C}$ is the branched covering of degree two with cover group generated by $\eta(z)=-z$ and branch values at $\lambda_{1}$ and $\lambda_{2}$. In this case $P^{-1}(\infty)=\pm i$, $P^{-1}(0)=\pm i \sqrt{\lambda_{2}/\lambda_{1}}$ and $P^{-1}(1)=\pm i \sqrt{(\lambda_{2}-1)/(\lambda_{1}-1)}$. These $6$ points define the hyperelliptic curve
$$C_{\lambda_{1},\lambda_{2}}: \; y^{2}=(x^{2}+1)\left(x^{2}+\lambda_{2}/\lambda_{1}\right)\left(x^{2}+(\lambda_{2}-1)/(\lambda_{1}-1)\right).$$

The curve $C_{\lambda_{1},\lambda_{2}}$ is one of the $10$ genus two Riemann surfaces uniformized by one of the acting freely subgroups $K_{j}$. The action of ${\mathfrak S}_{3} \oplus {\mathbb Z}_{2}$ at this level is given by:

$$
s: \; C_{\lambda_{1},\lambda_{2}} \mapsto C_{\frac{1}{1-\lambda_{1}},\frac{1}{1-\lambda_{2}}}:\; y^{2}=(x^{2}+1)\left(x^{2}+\frac{\lambda_{1}-1}{\lambda_{2}-1}\right)\left(x^{2}+\frac{\lambda_{2}(\lambda_{1}-1)}{\lambda_{1}(\lambda_{2}-1)}\right)
$$
$$
b: \; C_{\lambda_{1},\lambda_{2}} \mapsto C_{\frac{1}{\lambda_{1}},\frac{1}{\lambda_{2}}}:\; y^{2}=(x^{2}+1)\left(x^{2}+\frac{\lambda_{1}}{\lambda_{2}}\right)\left(x^{2}+\frac{\lambda_{1}(\lambda_{2}-1)}{\lambda_{2}(\lambda_{1}-1)}\right)
$$
$$
c: \; C_{\lambda_{1},\lambda_{2}} \mapsto C_{\lambda_{2},\lambda_{1}}: \; y^{2}=(x^{2}+1)\left(x^{2}+\frac{\lambda_{1}}{\lambda_{2}}\right)\left(x^{2}+\frac{\lambda_{1}-1}{\lambda_{2}-1}\right)
$$
\end{exem}

\subsubsection{Proof of Parts (1.3) and (1.4)}
 As see in Section \ref{hiper},  any subgroup $L<H$ isomorphic to ${\mathbb Z}_{2}^{n-1}$ that contains some $K_{k}$ is of the form 
$L=\langle K_{k},a_{j}\rangle$,
for some standard generator $a_{j}$ of $H$. Up to permutation of indices, we may assume
$K_{k}=\langle a_{1}a_{2}, a_{1}a_{3},...,a_{1}a_{n-1}\rangle.$ 
If $j \in \{1,2,...,n-1\}$, then
$L=\langle K_{k},a_{j} \rangle=\langle a_{1},a_{2},...,a_{n-1}\rangle$ and $H/L$ is the cyclic group generated by the hyperelliptic involution of $S/K_{k}$. We call any of these kind of subgroups $L$ a {\it hyperelliptic-${\mathbb Z}_{2}^{n-1}$-subgroups of $H$}. The following is now clear.

\begin{theo}\label{teo8}
If $(S,H)$ is a generalized Humbert pair of type $n \geq 4$ even, then the number of different hyperelliptic-${\mathbb Z}_{2}^{n-1}$-subgroups of $H$ is $n(n+1)/2$.
\end{theo}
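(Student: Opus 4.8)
The plan is to turn the statement into an elementary count of subsets of the standard generators. First I would record the identification made in the paragraph preceding the theorem: a hyperelliptic-${\mathbb Z}_{2}^{n-1}$-subgroup of $H$ is exactly a subgroup generated by $n-1$ of the $n+1$ standard generators. Indeed, starting from a free-acting $K_{k}=\langle a_{j_{1}}a_{j_{2}},\dots,a_{j_{1}}a_{j_{n-1}}\rangle$ (Theorem \ref{teo4}) and adjoining $a_{j_{1}}$ produces $\langle a_{j_{1}},a_{j_{2}},\dots,a_{j_{n-1}}\rangle$, and as $\{j_{1},\dots,j_{n-1}\}$ ranges over all $(n-1)$-element subsets of $\{1,\dots,n+1\}$ we obtain precisely these subgroups. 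Hence it suffices to count the subgroups of the form $L_{A}=\langle a_{i}:i\in A\rangle$ with $A\subset\{1,\dots,n+1\}$, $|A|=n-1$.

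Next I would prove that $A\mapsto L_{A}$ is injective, so that the number of such subgroups equals the number of $(n-1)$-subsets. The convenient bookkeeping is to record $A$ by its two-element complement $\{p,q\}=\{1,\dots,n+1\}\setminus A$, and to work in the vector-space model $H\cong{\mathbb Z}_{2}^{n+1}/\langle e_{1}+\cdots+e_{n+1}\rangle$, in which $a_{i}$ is the class of $e_{i}$ and the relation $a_{1}\cdots a_{n+1}=1$ is exactly the passage to the quotient. In this model $L_{A}$ is the image of ${\rm span}\{e_{i}:i\in A\}$, whose full preimage in ${\mathbb Z}_{2}^{n+1}$ is the hyperplane $\{x:x_{p}=x_{q}\}=\ker(x\mapsto x_{p}+x_{q})$. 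Since distinct unordered pairs $\{p,q\}$ give distinct functionals $x_{p}+x_{q}$, distinct complements give distinct hyperplanes, hence distinct subgroups $L_{A}$. Concretely, if $\{p,q\}\neq\{p',q'\}$ and $r\in\{p',q'\}\setminus\{p,q\}$, then $a_{r}\in L_{A}$ while $a_{r}\notin L_{A'}$, as one checks directly using $a_{1}\cdots a_{n+1}=1$.

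Finally, since $A\mapsto L_{A}$ is a bijection from the $(n-1)$-subsets of $\{1,\dots,n+1\}$ onto the set of hyperelliptic-${\mathbb Z}_{2}^{n-1}$-subgroups, their number equals the number of $(n-1)$-subsets of an $(n+1)$-set, namely $\binom{n+1}{2}=n(n+1)/2$, which is the assertion. As a consistency check, this matches the number of free-acting subgroups $K_{k}\cong{\mathbb Z}_{2}^{n-2}$ given by Theorem \ref{teo4}, each of which determines one such $L$.

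I do not expect a genuine obstacle—this is why the paper calls the statement ``now clear.'' The one step deserving care is the injectivity of $A\mapsto L_{A}$: a priori the single relation $a_{1}\cdots a_{n+1}=1$ could collapse two different $(n-1)$-subsets to the same subgroup, and the hyperplane/functional computation above is exactly what rules this out. I would also remark that the evenness of $n$ plays no role in the count itself; it enters only through Theorem \ref{teo4}, which guarantees that the relevant $K_{k}$ exist and that $H/L$ is generated by the hyperelliptic involution, i.e.\ that these $(n-1)$-generator subgroups are indeed the hyperelliptic ones.
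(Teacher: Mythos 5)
Your proof is correct and follows essentially the same route as the paper, which simply declares the statement ``now clear'' after identifying the hyperelliptic-${\mathbb Z}_{2}^{n-1}$-subgroups with the subgroups $\langle a_{i}: i\in A\rangle$ for $(n-1)$-subsets $A$ of $\{1,\dots,n+1\}$ and invoking the count $\binom{n+1}{2}=n(n+1)/2$ from Theorem \ref{teo4}. Your hyperplane argument for the injectivity of $A\mapsto L_{A}$ is exactly the detail the paper leaves implicit, and it is a worthwhile addition.
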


Let us now consider the case $j \in \{n,n+1\}$.
The two different groups 
$L_{1}=\langle K_{k},a_{n} \rangle$ and $L_{2}=\langle K_{k},a_{n+1} \rangle$
have the property that $H/L_{j}$ is generated by a conformal involution (different from the hyperelliptic one) of $S/K_{k}$ having exactly $2$ fixed points. In this way, $S/L_{j}$ is an orbifold of signature $((n-2)/2;2,2)$. We call these kind of groups $L_{j}$ a {\it non-hyperelliptic-${\mathbb Z}_{2}^{n-1}$-subgroups of $H$}. At this point, we note that, as there are exactly $n(n+1)/2$ different possibilities for $K_{k}$, there are at most $n(n+1)$ different non-hyperelliptic-${\mathbb Z}_{2}^{n-1}$-subgroups of $H$.

\begin{lemm}\label{lemma2}
Let $(S,H)$ be a generalized Humbert pair of type $n \geq 4$ even and let $a_{1}$,..., $a_{n+1}$ be the standard generators of $H$.
Let $j_{1},...,j_{n-1},k_{1},...,k_{n-1} \in \{1,2,...,n+1\}$ so that
$j_1,...,j_{n-1}$ are pairwise different, $k_1,...,k_{n-1}$ are also pairwise different. 
Let $U_{1}=\langle a_{j_1}a_{j_2}, a_{j_1}a_{j_3},...,a_{j_1}a_{j_{n-1}}\rangle$ and  
$U_{2}=\langle a_{k_1}a_{k_2}, a_{k_1}a_{k_3},...,a_{k_1}a_{k_{n-1}}\rangle$
If $a_{r} \in \{1,...,n+1\}-\{j_{1},...,j_{n-1},k_{1},...,k_{n-1}\}$, then
$\langle U_{1},a_{r}\rangle=\langle U_{2},a_{r}\rangle$.
\end{lemm}
\begin{proof}
We may assume, up to permutation of indices, that
$U_{1}=\langle a_{1}a_{2}, a_{1}a_{3},...,a_{1}a_{n-1}\rangle$ and $r=n+1$. As 
$a_{1}a_{n}a_{n+1}=(a_{1}a_{2})(a_{1}a_{3})\cdots(a_{1}a_{n-1}) \in U_{1},$
$a_{1}a_{n} \in \langle U_{1},a_{n+1}\rangle$. It follows that $a_{i}a_{j} \in \langle U_{1},a_{n+1}\rangle$, for all $i,j \in \{1,2,...,n\}$. This ensures $\langle U_{2}, a_{n+1}\rangle < \langle U_{1},a_{n+1}\rangle$ and, in particular, that they are equal.
\end{proof}

As consequence of the previous Lemma, we obtain.

\begin{theo}\label{teo9}
Let $(S,H)$ be a generalized Humbert pair of type $n \geq 4$ even. Then,
there are exactly $n+1$ different non-hyperelliptic-${\mathbb Z}_{2}^{n-1}$-subgroups of $H$.
\end{theo}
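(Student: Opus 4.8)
The plan is to show that each non-hyperelliptic-${\mathbb Z}_{2}^{n-1}$-subgroup is completely determined by a single one of the $n+1$ standard generators it contains, so that there are at most $n+1$ of them, and then to verify that all $n+1$ possibilities genuinely occur and are pairwise distinct. The engine of the argument is Lemma \ref{lemma2}.

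First I would fix notation: by Theorem \ref{teo4}, every freely acting $K_{k} \cong {\mathbb Z}_{2}^{n-2}$ is of the form $\langle a_{j_{1}}a_{j_{2}},\ldots,a_{j_{1}}a_{j_{n-1}}\rangle$, so it is determined by the $(n-1)$-element index set $\{j_{1},\ldots,j_{n-1}\}$, equivalently by the complementary pair $\{r,s\}$ of its two missing indices. By construction a non-hyperelliptic-${\mathbb Z}_{2}^{n-1}$-subgroup is exactly a group $\langle K_{k},a_{r}\rangle$ where $a_{r}$ is one of the two standard generators whose index is missing from $K_{k}$. For each index $r \in \{1,\ldots,n+1\}$ I would then define $M_{r}:=\langle K_{k},a_{r}\rangle$ for any choice of $K_{k}$ having $r$ among its two missing indices (such a $K_{k}$ exists: take any pair $\{r,s\}$ missing with $s \neq r$). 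Applying Lemma \ref{lemma2} with $U_{1}=K_{k}$ and $U_{2}=K_{k'}$, both missing the index $r$, gives $\langle K_{k},a_{r}\rangle=\langle K_{k'},a_{r}\rangle$, so $M_{r}$ is well defined independently of the chosen $K_{k}$. Since every non-hyperelliptic-${\mathbb Z}_{2}^{n-1}$-subgroup is some $\langle K_{k},a_{r}\rangle=M_{r}$, there are at most $n+1$ of them.

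It then remains to prove the $M_{r}$ are pairwise distinct. The cleanest route is to pass to the $\mathbb{F}_{2}$-linear model $H \cong \mathbb{F}_{2}^{n+1}/\langle(1,\ldots,1)\rangle$, identifying each element with a subset of $\{1,\ldots,n+1\}$ modulo complementation; since $n$ is even, $n+1$ is odd and each class has a unique even-weight representative, so $H$ is identified with the even-weight subsets under symmetric difference, with $a_{r} \leftrightarrow \{1,\ldots,n+1\}\setminus\{r\}$ and $a_{i}a_{j} \leftrightarrow \{i,j\}$. A short computation in this model identifies $M_{r}$ with the hyperplane of even-weight subsets not containing $r$; in particular $a_{s} \in M_{r}$ if and only if $s=r$. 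Hence $M_{r}$ recovers the index $r$ as the unique standard generator it contains, so the groups $M_{1},\ldots,M_{n+1}$ are pairwise distinct. Combined with the upper bound, this yields exactly $n+1$.

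The only genuine obstacle is the well-definedness of $M_{r}$, namely the independence of $\langle K_{k},a_{r}\rangle$ from the choice of $K_{k}$ among those missing $r$; but this is precisely the content of Lemma \ref{lemma2}, so the difficulty has already been isolated there. The remaining distinctness step is the routine linear-algebra verification indicated above, and the realizability of every index $r$ is immediate.
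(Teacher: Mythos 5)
Your proposal is correct and follows essentially the same route as the paper: the paper also deduces Theorem \ref{teo9} directly from Lemma \ref{lemma2}, which shows that $\langle K_{k},a_{r}\rangle$ depends only on the missing index $r$ and not on the choice of $K_{k}$, giving the upper bound $n+1$. Your additional ${\mathbb F}_{2}$-linear verification that the groups $M_{1},\ldots,M_{n+1}$ are pairwise distinct (via $a_{s}\in M_{r}$ if and only if $s=r$) fills in a step the paper leaves implicit, but it does not change the underlying argument.
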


Now, let $L_{1},..., L_{n+1}<H$ the $(n+1)$ different non-hyperelliptic-${\mathbb Z}_{2}^{n-1}$-subgroups of $H$. Again, as for generic pair $(S,H)$ we have that $S/H$ has trivial orbifold automorphism group, generically the $(n+1)$ orbifolds $S/L_{1}$,..., $S/L_{n+1}$ (each one of signature $((n-2)/2;2,2)$) are pairwise conformally non-equivalent. In particular, it follows that 
$${\mathcal M}_{0,n+1} \to \left({\mathcal M}^{hyp}_{((n-2)/2;2,2)}\right)^{n+1}:
[(S,H)] \to ([S/L_{1},H/L_{1}],...,[S/L_{n+1},H/L_{n+1}])$$
is a generically injective holomorphic map, obtaining Part (1.3) of Theorem \ref{main}.
As a generalized Humbert curve is the homology covering of an orbifold of genus zero with all of its cone points of order two, it follows Part (1.4) of Theorem \ref{main}.

\begin{rema}
In order to get equations for the underlying hyperelliptic Riemann surfaces $S/L_{j}$, we only need to choose one of the conical points of $S/H$ and consider the hyperelliptic Riemann surface determined by the other $n$ conical points. For example, if 
$n=4$ and $(\lambda_{1},\lambda_{2}) \in V_{4}$, then, up to equivalence, the $n+1=5$ curves of genus one are given by
$$\begin{array}{c}
y^{2}=x(x-1)(x-\lambda_{1}(\lambda_{2}-1)/\lambda_{2}(\lambda_{1}-1)), \quad
y^{2}=x(x-1)(x-(\lambda_{2}-1(/(\lambda_{1}-1)), \\
y^{2}=x(x-1)(x-\lambda_{1}/\lambda_{2}), \quad
y^{2}=x(x-1)(x-\lambda_{1}),\quad
y^{2}=x(x-1)(x-\lambda_{2}).
\end{array}
$$
\end{rema}

\subsection{Proof of part (2) of Theorem \ref{main}}
Let us now assume $(S,H)$ is a generalized Humbert pair of type $n \geq 5$ odd and that $K_{1}$,..., $K_{n(n+1)/2}$ are those subgroups isomorphic to ${\mathbb Z}_{2}^{n-2}$ acting freely on $S$. We may proceed as in the even case and to obtain the commutative diagram
$$
\xymatrixcolsep{4pc}
\xymatrix{ 
 V_{n} \ar[r]^{{\mathfrak S}_{n-1}} \ar[rd]^{{\mathfrak S}_{n+1}} &V_{n}/{\mathfrak S}_{n-1} \ar[d]^{n(n+1)} \\ 
 &{\mathcal M}_{0,n+1}}
$$
\noindent
where $V_{n}/{\mathfrak S}_{n-1}$ is a model for the moduli space of hyperelliptic Riemann surfaces admitting a conformal involution with exactly $4$ fixed points. The proofs of Parts (2.1) and (2.2) follows the same lines as the previous cases.



\begin{thebibliography}{99}

\bibitem{Accola}
Accola, R. 
{\it Topics in the Theory of Riemann Surfaces}. 
Springer-Verlag, 1994.

\bibitem{Baker}
Baker, H. F. 
{\it An introduction to the theory of multiply periodic functions}.
 Cambridge Univ. Press, 1907.


\bibitem{CGHR}
Carocca, A., Gonz\'alez, V., Hidalgo, R. A. and Rodr\'{\i}guez, R.
Generalized Humbert Curves.
{\it Israel Journal of Mathematics} {\bf 64}, No. 1 (2008), 165--192.

\bibitem{CHQ}
Carvacho, M.,  Hidalgo, R. A. and Quispe, S.
Jacobian variety of generalized Fermat curves.
{\it Quarterly Journal of Math.} {\bf 67} (2016), 261--284.


\bibitem{Edge}
Edge, W. L. 
Humbert's plane sextics of genus $5$. 
{\it Math. Proc. Cambridge Phil. Soc.} {\bf 47} No. 3 (1951), 483--495.

\bibitem{Edge2}
Edge, W. L.
The common curve of quadrics sharing a self-polar simplex.
{\it Ann. Mat. Pura Appl.} {\bf 114} (1977), 241--270.

\bibitem{GHL}
Gonz\'alez-Diez, G.,  Hidalgo, R. A. and Leyton, M.
 Generalized Fermat Curves.
{\it Journal of Algebra} {\bf 321} (2009), 1643--1660.

\bibitem{Hidalgo}
Hidalgo, R. A. 
Homology closed Riemann surfaces.
{\it Quarterly Journal of Math.} {\bf 63} (2012), 931--952.

\bibitem{Hidalgo:bases}
Hidalgo, R. A. 
Holomorphic differentials of generalized Fermat curves.
https://arxiv.org/abs/1710.01349


\bibitem{HKLP}
Hidalgo, R. A.,  Kontogeorgis, A.,  Leyton-Alvarez, M. and Paramantzouglou, P.
Automorphisms of generalized Fermat curves.
{\it Journal of Pure and Applied Algebra} {\bf 221} (2017), 2312--2337

\bibitem{Humbert}
Humbert, G.
Sur un complexe remarquable de coniques. 
{\it Jour. d'\'Ecole Polytechnique} {\bf 64} (1894), 123--149.

\bibitem{Maclachlan}
Maclachlan, C.
Abelian groups of automorphisms of compact Riemann surfaces.
{\it Proc. London Math. Soc.} (3) {\bf 15} (1965), 699--712. 


\bibitem{Mumford}
Mumford, D.,  Fogarty, J. 
{\it Geometric invariant theory}. 
Second edition. Ergebnisse der Mathematik und ihrer Grenzgebiete [Results in Mathematics and Related Areas], {\bf 34}. 
Springer--Verlag, Berlin, 1982. xii+220 pp. ISBN: 3--540--11290--1. 


\bibitem{Nag}
Nag, S.
{\it The complex analytic theory of Teichm\"uller spaces.}
A Wiley-Interscience Publication. John Wiley \& Sons, Inc., New York 1988.

\bibitem{Noether1}
Noether, E. 
Der Endlichkeitssatz der Invarianten endlicher Gruppen.
{\it Math. Ann.} {\bf 77} (1916), 89--92.




\bibitem{Royden}
Royden, H. L. 
Automorphisms and isometries of Teichmueller space.
In Lars V. Ahlfors et al., editor, Advances in the Theory of Riemann Surfaces, Vol. {\bf 66} of Ann. of Math. Studies (1971), 369--383. 

\bibitem{Saint}
Saint-Donat, B. 
On Petri's analysis of the linear system of quadrics through a canonical curve.
{\it Math. Ann.} {\bf 206} (1973), 157--175.

\bibitem{Varley}
Varley, R.
Weddle's surfaces, Humbert's curves and a certain 4-dimensional abelian variety.
{\it Amer. J. of Math.} {\bf 108} (1986), 931--952.


\end{thebibliography}
\end{document}